\theoremstyle{plain}
\newtheorem{thm}{Theorem}[section]
\newtheorem{prop}[thm]{Proposition}
\newtheorem{lem}[thm]{Lemma}
\newtheorem{cor}[thm]{Corollary}
\theoremstyle{definition}
\newtheorem{defn}[thm]{Definition}
\newtheorem{example}[thm]{Example}
\newtheorem{query}[thm]{Question}
\theoremstyle{remark}
\newtheorem{claim}[thm]{Claim}
\numberwithin{equation}{section}
\renewcommand\epsilon{\varepsilon}
\DeclareMathOperator\Iso{{\mathrm Iso}}
\DeclareMathOperator\Mod{{\mathrm Mod}}
\DeclareMathOperator\Th{{\mathrm Th}}
\crefname{thm}{Theorem}{Theorems}
\def\B{{\mathcal B}}
\def\S{{\mathcal S}}
\def\S{{\mathcal S}}
\def\AS{\mathrm{AS}}
\def\PA{\mathrm{PA}}
\def\TA{\mathrm{TA}}
\def\Pib{\boldsymbol\Pi}
\def\Sigmab{\boldsymbol\Sigma}
\def\Deltab{\boldsymbol\Delta}
\def\Sigmab{\boldsymbol\Sigma}
\newcommand{\A}{\mathcal{A}}
\renewcommand{\phi}{\varphi}
\newcommand{\Pinf}[1]{\Pi^{\mathrm{in}}_{#1}}
\newcommand{\Sinf}[1]{\Sigma^{\mathrm{in}}_{#1}}
\newcommand{\Wwedge}{%
  \mathop{
    \mathchoice{\bigwedge\mkern-15mu\bigwedge}
               {\bigwedge\mkern-12.5mu\bigwedge}
               {\bigwedge\mkern-12.5mu\bigwedge}
               {\bigwedge\mkern-11mu\bigwedge}
    }
}
\newcommand{\Vvee}{%
  \mathop{
    \mathchoice{\bigvee\mkern-15mu\bigvee}
               {\bigvee\mkern-12.5mu\bigvee}
               {\bigvee\mkern-12.5mu\bigvee}
               {\bigvee\mkern-11mu\bigvee}
    }
}
\newcommand{\bx}{\overline x}
\newcommand{\ba}{\overline a}
\newcommand{\concat}{^\smallfrown}
\begin{document}

\title[The complexity of the class of models of theories]
{The Borel complexity of the class\\
of models of first-order theories}

\author[Andrews]{Uri Andrews}
\author[Gonzalez]{David Gonzalez}
\author[Lempp]{Steffen Lempp}
\author[Rossegger]{Dino Rossegger}
\author[Zhu]{Hongyu Zhu}

\address[Andrews, Lempp, Zhu]{Department of Mathematics\\
  University of Wisconsin\\
  Madison, Wisconsin 53706-1325\\
  USA}
\email{\href{mailto:andrews@math.wisc.edu}{andrews@math.wisc.edu}}
\urladdr{\url{http://www.math.wisc.edu/~andrews}}

\email{\href{mailto:lempp@math.wisc.edu}{lempp@math.wisc.edu}}
\urladdr{\url{http://www.math.wisc.edu/~lempp}}

\email{\href{mailto:hongyu@math.wisc.edu}{hongyu@math.wisc.edu}}
\urladdr{\url{https://sites.google.com/view/hongyu-zhu/}}

\address[Gonzalez]{Department of Mathematics\\
 University of California, Berkeley\\
 Berkeley, California 94720-1234\\
 USA}
\email{\href{mailto:david_gonzalez@berkeley.edu}%
{david\_gonzalez@berkeley.edu}}
\urladdr{\url{https://www.davidgonzalezlogic.com/}}

\address[Rossegger]{Institut f\"ur Diskrete Mathematik und Geometrie\\
  Technische Universit\"at Wien\\
  Wiedner Hauptstra{\ss}e 8-10\\
  1040 Wien\\
  AUSTRIA}
\email{\href{mailto:dino.rossegger@tuwien.ac.at}{dino.rossegger@tuwien.ac.at}}
\urladdr{\url{https://drossegger.github.io/}}
\subjclass[2020]{03C62,03C52,03E15}

\keywords{models of arithmetic, Borel complexity, countable models,
descriptive set theory}

\thanks{The first author acknowledges support from the National Science
Foundation under Grant No.\ DMS-2348792. The third author's research was
partially supported by AMS-Simons Foundation Collaboration Grant 626304. The
fourth author's work was supported by the European Union's Horizon 2020
Research and Innovation Programme under the Marie Sk\l{}odowska-Curie grant
agreement No.~101026834 — ACOSE and the Austrian Science Fund
(FWF)~10.55776/P36781. Part of this research was carried out while
the third author was visiting Technical University of Vienna as a guest
professor, and he would like to express his gratitude to this university. --
The authors would like to express their gratitude to Roman Kossak for
connecting them with Ali Enayat and Albert Visser, who were able to establish
complementary results to ours on sequential theories.
%FROM URI: Only thanking Kossak, but not Enayat or Visser for discussions?
%FROM STEFFEN: I think this is a fair reflection of what actually happened:
%Roman directly talked to us about the math; Ali only sent emails with
%attached papers and logistic issues (like timing and where to submit);
%and Albert never contacted us directly to my knowledge.
}

\begin{abstract}
We investigate the descriptive complexity of the set of models of
first-order theories. Using classical results of Knight and Solovay, we
give a sharp condition for complete theories to have a
$\Pib_\omega^0$-complete set of models.
%FROM STEFFEN: I think we should add the next sentence!
In particular, any sequential theory (a class of foundational theories
isolated by Pudl\'ak) has a $\Pib_\omega^0$-complete set
of models.
We also give sharp conditions for theories to have a $\Pib^0_n$-complete
set of models.
%Finally, we determine
%the Turing degrees needed to witness the completeness.
%FROM URI: The previous removed since we're removing section 6.
\end{abstract}

\maketitle

\section{Introduction}\label{sec:intro}

We characterize the possible Borel complexities of the set of models of a
first-order theory. For a single formula~$\phi$, Wadge \cite[I.F.3 and
I.F.4]{Wa83}, using a result by Keisler~\cite{Ke65}, showed that if~$\phi$ is
an $\exists_n$-formula which is not equivalent to a $\forall_n$-formula, then
the set of models of~$\phi$ is a $\Sigmab^0_n$-complete set under Wadge
reduction. We extend this result to considering (possibly incomplete)
first-order theories~$T$ and giving conditions on~$T$ determining the
complexity of $\Mod(T)$, the set of models of~$T$.

We show that a complete theory~$T$ has no $\forall_n$-axiomatization for any
finite~$n$ if and only if $\Mod(T)$ is $\Pib^0_\omega$-complete. Prior to
this result, showing that $\Mod(T)$ is $\Pib^0_\omega$-complete was difficult
even for familiar theories, e.g., Rossegger~\cite{Ro20} asked this for the
theory~TA of true arithmetic. We also show that for any finite~$n$, a
(possibly incomplete) theory~$T$ has a $\forall_n$-axiomatization if and only
if $\Mod(T)$ is~$\Pib^0_n$. If~$T$ does not have a
$\forall_n$-axiomatization, then~$\Mod(T)$ is $\Sigmab^0_n$-hard.

By Vaught's proof~\cite{Va74} of the Lopez-Escobar theorem, showing that the
set of models of~$T$ is~$\Sigmab^0_n$ (or~$\Pib^0_n$) is equivalent to
showing that~$T$ is equivalent to a~$\Sinf{n}$-formula
(or~$\Pinf{n}$-formula, respectively). Also, Wadge's lemma shows that if
the set of models of~$T$ is~$\Sigmab^0_n$ and not~$\Pib^0_n$, then it must be
$\Sigmab^0_n$-complete. Thus, an equivalent way to present our main results
is in terms of when a first-order theory~$T$ is equivalent to a formula
in~$L_{\omega_1\omega}$. For example, it follows that a first-order theory is
equivalent to a $\Pinf{n}$-sentence if and only if it has a
$\forall_n$-axiomatization.

This is related to Keisler's result \cite[Corollary~3.4]{Ke65}
%FROM STEFFEN: Since Keisler's language is quite different from ours, I think
%it's useful to cite the exact place in Keisler's paper
that was recently reproved by Harrison-Trainor and Kretschmer~\cite{HK23}. If a formula in the infinitary logic $L_{\infty\omega}$ is $\Pi^\mathrm{in}_n$ and equivalent to a finitary formula, then it is equivalent to a finitary $\forall_n$-formula.
In fact, our result, when applied to a single formula,
implies the restricted version of this result for $L_{\omega_1\omega}$ formulas via an easy application of compactness. Keisler's result can be interpreted as showing that, though infinitary logic can express much more than finitary logic, it
cannot express things more efficiently, i.e., in fewer quantifier alternations, than
finitary logic.
%FROM Dino: Blocks, alternations?? Is it true that you cant have less quantifiers?

Interestingly, all three proofs are quite different. Keisler used games and
saturated models, Harrison-Trainor and Kretschmer used arithmetical forcing,
and we use iterated priority constructions. One advantage of our technique is
that, while combinatorially quite complicated, the metamathematics involved
is quite tame. This suggests that our results are a consequence of, if not
equivalent to, compactness. By contrast, Wadge's result that $\Mod(\phi)$ is
$\Sigmab^0_n$-complete if~$\phi$ is $\exists_n$ and not equivalent to a
$\forall_n$-formula relies upon Borel Wadge determinacy. Louveau and
Saint Raymond~\cite{LS87} showed that Borel Wadge determinacy can be proven
in second-order arithmetic. However, all proofs known to date need strong
fragments~\cite{DGHTa}. The second and major advantage is that we consider
theories, not simply formulas.

\section{Preliminaries}

Given a Polish space~$X$, the Borel hierarchy on~$X$ gives us a way to
stratify subsets of~$X$ in terms of their descriptive complexity. A natural
space is the space of countably infinite structures in a countable relational
vocabulary~$\tau$, which we can view as a closed subspace of~$2^\omega$ as
follows. Fix an enumeration of the atomic $\tau$-formulas
$(\phi_i(x_0,\dots,x_i))_{i\in \omega}$; then given a $\tau$-structure~$\A$ with
universe~$\omega$, define its \emph{atomic diagram} by
\[
D(\A)(i)=\begin{cases} 1 & \A \models \phi_i[x_j\mapsto j :j\leq i],\\
                       0 & \text{otherwise.}
         \end{cases}
\]
Let $\Mod(\tau)\subseteq 2^\omega$ be the set of atomic diagrams of
$\tau$-structures with universe~$\omega$. Then it is easy to see that
$\Mod(\tau)$ is a closed subset of Cantor space and thus a Polish space via
the subspace topology.

For a first-order theory~$T$, $\Mod(T)=\{ D(\A): \A\models T\}$ is a
subset of $\Mod(\tau)$, and it is natural to ask how complex
$\Mod(T)$ is in terms of its Borel complexity. It is not hard to see that
$\Mod(T)$ can be at most $\Pib^0_\omega$: It follows from Vaught's
proof~\cite{Va74} of the Lopez-Escobar theorem~\cite{Lo65} that an
isomorphism-invariant subset of $\Mod(\tau)$ is $\Pib^0_\alpha$ if and only
if it is definable by a $\Pinf{\alpha}$-formula in the infinitary
logic~$L_{\omega_1\omega}$ for $\alpha<\omega_1$. Note that we use the
notation $\Pinf{\alpha}$ to refer to formulas in~$L_{\omega_1\omega}$ at that
complexity, and we use~$\forall_n$ to refer to $L_{\omega\omega}$-formulas
with~$n$ alternating quantifier blocks beginning with~$\forall$. Since
$\Mod(T)$ is the set of models of the infinitary formula $\Wwedge_{\phi\in T}
\phi$, and every~$\phi$ is~$\exists_n$ for some $n \in \omega$, we get that
$\Mod(T)$ is at most~$\Pib^0_\omega$.

However, for a fixed theory~$T$, it turns out to be quite difficult to
establish that $\Mod(T)$ is not simpler. The main theorem of this paper
establishes a complete characterization of first-order theories~$T$ such that
$\Mod(T)$ is $\Pib^0_\omega$-complete. To establish this notion of
completeness, we use Wadge reducibility; a subset~$X_1$ of a Polish
space~$Y_1$ is \emph{Wadge reducible} a subset~$X_2$ of a Polish space~$Y_2$
(denoted as $X_1 \leq_W X_2$) if there is a continuous function $f:Y_1\to
Y_2$ such that for all $y\in Y_1$, $y\in X_1$ if and only if $f(y)\in X_2$. A
set $X \subseteq Y$ is \emph{$\Gamma$-hard} for a pointclass~$\Gamma$ if for
every $Z\in \Gamma$, $Z\leq_W X$, and~$X$ is \emph{$\Gamma$-complete} if $X
\in \Gamma$ and~$X$ is $\Gamma$-hard.
%FROM URI: Again removed, since we removed section 6:
%We are also able to provide sharp
%bounds on the Turing degree of the continuous function witnessing the
%reduction.
Wadge~\cite{Wa83} showed that there is only one \emph{Wadge degree}
(equivalence class induced by $\leq_W$) of a $\Sigmab^0_\alpha$-set which is
not~$\Pib^0_\alpha$ and vice versa. We refer the reader to
Kechris~\cite{Ke95} for more on Wadge reducibility and a thorough
introduction to descriptive set theory.

At last, we mention that we may assume that theories are in the language of
graphs. It is well-known that every structure is interpretable in a graph,
preserving most model-theoretic properties. For example, in~\cite[Section
3.2]{Ro22}, for a given relational vocabulary~$\tau$, a continuous function
$g:\Mod(\tau)\to \Mod(\textrm{Graphs})$ was given such that for all
$\tau$-structures,~$\A$ embeds into~$\B$ if and only if $g(\A)$ elementarily
embeds into $g(\B)$. Easy modifications to the proofs there show that
$\A\equiv\B$ if and only if $g(\A)\equiv g(\B)$, and thus for any
$\tau$-structure~$\A$, $\Mod(\Th(\A))\equiv_W \Mod(\Th(g(\A))$. The same
reduction was given in a different context in~\cite[Proposition 2]{AM15}, the
proof there also shows that $\Mod(\Th(\A))\equiv_W \Mod(\Th(g(\A)))$.

\section{Theories without bounded axiomatization}\label{sec:unbdd}

\begin{defn}
A first-order theory~$T$ is \emph{boundedly axiomatizable} if there is
some~$n$ such that~$T$ has a $\forall_n$-axiomatization.
\end{defn}

Our main result for theories that are not boundedly axiomatizable is the
following

\begin{thm}\label{thm:Piomega}
A complete first-order theory~$T$ has a $\Pib^0_\omega$-complete set of
models if and only if~$T$ is not boundedly axiomatizable.
\end{thm}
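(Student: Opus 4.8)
The plan is to prove both implications, with the forward direction (non-bounded-axiomatizability $\Rightarrow$ $\Pib^0_\omega$-completeness) carrying essentially all of the difficulty. For the easy converse I would argue contrapositively: if $T$ is boundedly axiomatizable, say by a $\forall_n$-sentence, then by the companion characterization announced in the introduction (a theory has a $\forall_n$-axiomatization if and only if its model set is $\Pib^0_n$), we get $\Mod(T)\in\Pib^0_n$. Since a $\Pib^0_\omega$-complete set is $\Pib^0_\omega$-hard and hence lies outside $\Pib^0_n$ for every finite~$n$, this forces $\Mod(T)$ not to be $\Pib^0_\omega$-complete.

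For the forward direction, the upper bound $\Mod(T)\in\Pib^0_\omega$ is automatic from the preliminary discussion, so everything reduces to $\Pib^0_\omega$-hardness. A first, cheap observation is that for each~$n$ the failure of $\forall_n$-axiomatizability gives (again via the companion characterization) $\Mod(T)\notin\Pib^0_n$; since $\Mod(T)$ is Borel, Wadge's lemma then yields that $\Mod(T)$ is $\Sigmab^0_n$-hard. The crux, however, is that this ``$\Sigmab^0_n$-hard for every~$n$'' does \emph{not} by itself give $\Pib^0_\omega$-hardness, since a set can be $\Sigmab^0_n$-hard for all~$n$ while being only $\Sigmab^0_\omega$-complete. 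What is needed is to realize the full product structure of a $\Pib^0_\omega$-complete set: fixing $\Sigmab^0_n$-complete sets~$S_n$, the set $C=\{(y_n)_n : \forall n\ y_n\in S_n\}$ is $\Pib^0_\omega$-complete, and I must produce a single continuous reduction $y\mapsto\A_y$ with $\A_y\models T\iff y\in C$, i.e.\ combine all levels simultaneously into one structure.

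To build this reduction I would use completeness of~$T$ to identify $\Mod(T)$ with a fixed elementary-equivalence class, and then, from the failure of $\forall_n$-axiomatizability at each~$n$, extract witnessing configurations: structures that agree with a model of~$T$ on all $\forall_n$-sentences (equivalently, are indistinguishable at the level-$n$ back-and-forth relation) yet are not models of~$T$. The classical results of Knight and Solovay supply these witnesses with enough uniformity to be assembled effectively and continuously, and link the syntactic failure of bounded axiomatizability to the back-and-forth hierarchy that governs $\Pib^0_n$-complexity. The reduction is then an iterated priority construction: at level~$n$ the construction guesses the answer to the $\Sigmab^0_n$-question ``$y_n\in S_n$,'' committing the level-$n$ part of $\A_y$ to local data that completes to a genuine model of~$T$ if the guess is confirmed and is otherwise injured and rebuilt. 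Because confirmation of ``$y_n\in S_n$'' is a $\Sigmab^0_n$-event and all levels must succeed, the condition $\A_y\models T$ becomes exactly the $\Pib^0_\omega$-condition $y\in C$.

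The main obstacle is this iterated construction together with its correctness proof. Ensuring continuity is routine (each bit of $D(\A_y)$ should depend on only finitely much of~$y$), but proving the biconditional is delicate: I must show that when every guess is eventually confirmed the level-$n$ approximations genuinely glue into a model of the complete theory~$T$ --- not merely into a structure satisfying the $\forall_n$-theory of~$T$ for each separate~$n$ --- while a single wrong answer at some level~$n$ provably prevents $\A_y$ from modeling~$T$. Controlling the $\omega$-many interacting levels of injury, and verifying that the limit structure's full elementary type is correct, is where the combinatorial heart of the argument lies; this is precisely the step that upgrades the formal ``$\Sigmab^0_n$-hard for each~$n$'' to honest $\Pib^0_\omega$-hardness.
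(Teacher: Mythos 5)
Your outline follows essentially the same route as the paper: the converse via the observation that a $\forall_n$-axiomatizable theory has a $\Pib^0_n$ model class, and the hard direction by extracting from non-bounded-axiomatizability, for each~$n$, a complete theory agreeing with~$T$ on $\exists_n$-sentences (this is the paper's intermediate \cref{thm:TTn}) and then feeding level-by-level guesses about a $\Pib^0_\omega$-complete intersection into a uniformized version of Knight's model-construction theorem \cite{Kn99}, which is exactly the iterated priority construction you describe. One minor caution: agreement on finitary $\forall_n$-sentences is \emph{not} equivalent to level-$n$ back-and-forth equivalence (that mismatch is precisely why the result is nontrivial), but your argument does not actually rely on that parenthetical identification.
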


In fact, \cref{thm:Piomega} follows directly from the following more
technical fact.

\begin{thm}\label{thm:TTn}
Let~$T$ be any complete first-order theory for which there is a collection of
complete theories $\{T_n\}_{n \in \omega}$ such that for all $n \in \omega$,
$T \neq T_n$ but $T \cap \exists_n = T_n \cap \exists_n$. Then, the collection
of models of~$T$ is $\Pib^0_\omega$-complete. Indeed, for each
$\Pib^0_\omega$-set~$P$, there is a continuous function mapping any $p \in P$
to a model of~$T$, and any $p \notin P$ to a model satisfying~$T_n$ for
some~$n$.
\end{thm}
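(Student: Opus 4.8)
The upper bound $\Mod(T)\in\Pib^0_\omega$ is already recorded in the preliminaries, so the entire content is $\Pib^0_\omega$-hardness, and the refined ``Indeed \dots'' clause is a uniformity that I will read off directly from the construction. Since $T$ and the $T_n$ are pairwise distinct \emph{complete} theories, their model classes are pairwise disjoint; hence it suffices to produce a single continuous $f$ from (the domain of) one fixed $\Pib^0_\omega$-complete set $P$ into $\Mod(\tau)$ with $f(p)\models T$ when $p\in P$ and $f(p)\models T_n$ for some $n$ when $p\notin P$, because then $f(p)\models T\iff p\in P$, and an arbitrary $\Pib^0_\omega$-set reduces to $\Mod(T)$ by composing its Wadge reduction into $P$ with $f$. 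The plan is to first normalize $P$: write $P=\bigcap_n C_n$ with $C_n\in\Pib^0_{n+1}$ and, after replacing $C_n$ by $\bigcap_{k\le n}C_k$, arrange $C_{n+1}\subseteq C_n$, so that $m(p):=\min\{n:p\notin C_n\}\in\omega\cup\{\infty\}$ is well defined and $p\in P\iff m(p)=\infty$. The target becomes: build $f(p)$ continuously so that $f(p)\models T$ when $m(p)=\infty$, and $f(p)\models T_k$ for a suitable fixed function $k$ of $m(p)$ otherwise.

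The model-theoretic engine will be a \emph{switching lemma} converting the hypothesis $T\cap\exists_n=T_n\cap\exists_n$ into a statement about finite approximations: a finite configuration realizable ``up to quantifier rank $n$'' in a model of $T$ is realizable in a model of $T_n$, and conversely. Concretely, if $\delta(\bar x)$ is the quantifier-free diagram of a finite tuple and $E(\bar x)$ is a finite conjunction of $\exists_n$-formulas we wish that tuple to satisfy, then the realizability of this configuration in a model of $T$ is exactly the failure of the $\forall_n$-sentence $\forall\bar x\,\neg(\delta\wedge E)$; by completeness together with $T\cap\exists_n=T_n\cap\exists_n$ (and hence agreement on all $\forall_n$-sentences, obtained by negating), $T$ proves this sentence iff $T_n$ does, so the configuration transfers between the two theories. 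Because $T\neq T_n$ while they agree below quantifier rank $n$, the theories are separated only by facts of complexity exceeding $n$. This is precisely what permits a construction that has so far committed only to $\exists_{\le n}$-facts about its tuples to be completed freely, either toward a model of $T$ or toward a model of $T_n$.

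I would then build the atomic diagram of $f(p)$ in stages as an iterated priority construction, maintaining a finite structure together with requirements $R_n$ ($n\in\omega$): $R_n$ is charged both with installing the $\exists_n$-witness structure demanded by a fixed model of $T$ and with watching the approximation of ``$p\in C_n$,'' whose failure is an event of complexity governed by $n$ and therefore has a finite-mind-change approximation along any branch. The requirements are organized in a nested ($n$-fold injury) priority, so that an action of $R_n$ injures only the lower-priority $R_k$ with $k>n$. If every level stays ``in,'' each $R_n$ eventually settles having installed the correct existential structure at its level, and in the limit $f(p)$ is elementarily equivalent to the fixed model of $T$, whence $\Th(f(p))=T$ by completeness. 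If instead $m(p)=n_0<\infty$, then $R_{n_0}$ permanently observes failure, freezes the $\exists_{\le n_0}$-commitments made so far, and, invoking the switching lemma, completes the structure to a copy of a fixed model of $T_{n_0}$, so $\Th(f(p))=T_{n_0}$. Continuity is immediate, since each stage reads only finitely much of $p$ and every bit of $D(f(p))$ is decided at a finite stage.

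The hard part will be the bookkeeping that keeps this honest across all $\omega$ levels simultaneously: I must guarantee that at the moment $R_{n_0}$ redirects the target from $T$ to $T_{n_0}$, the configuration already committed involves only $\exists_{\le n_0}$-facts, so that the switching lemma genuinely applies and no inconsistency is introduced. This alignment between the mind-change structure of the $C_n$-approximations and the quantifier rank of the facts committed by higher-priority requirements is the crux and the only place where the hypothesis is really used. A secondary delicate point is the true-stage verification in the case $m(p)=\infty$: one must confirm that the infinitely-often-revised construction still converges to a \emph{total} structure on $\omega$ all of whose requirements are eventually met, so that the limit is a genuine model elementarily equivalent to $T$ rather than a merely partial object. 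Finally, the reduction to a graph vocabulary, so that we may speak of a single ambient $\Mod(\tau)$, is handled by the continuous interpretation recalled in the preliminaries, which preserves elementary equivalence and hence the value of $\Th(f(p))$.
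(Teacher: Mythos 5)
Your high-level strategy matches the paper's: write $P=\bigcap_n P_n$ with the $P_n$ decreasing and $P_n\in\Pib^0_n$, and at level $n$ switch the target theory from $T$ to $T_{k_0}$ for the least $k_0$ with $p\notin P_{k_0}$, using $T\cap\exists_n=T_n\cap\exists_n$ (hence $T\cap\exists_m=T_{k_0}\cap\exists_m$ for $m\le k_0$) to make the switch invisible at the levels already committed. That is exactly the reduction the paper performs. The difference is that the paper does not attempt the construction from scratch: it packages the entire engine into a uniform version of a theorem of Knight (\cite[Theorem~2.5]{Kn99}), which takes an enumeration of a Scott set containing all the fragments $T\cap\exists_n$, $T_k\cap\exists_n$, together with functions $t_n$ that are $\Delta^0_n(x)$ uniformly in $n$ and converge to indices for the $\exists_n$-fragment of a single complete theory, and outputs a model of that theory computably in $x$. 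The whole proof then consists of exhibiting the functions $t_n$ (decide $p\in P_{n-1}$ from $(c\oplus p)^{(n-1)}$ and output the appropriate index) and checking uniformity of Knight's construction.

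The gap in your write-up is precisely the part you flag as ``the hard part'': the $\omega$-level iterated priority construction is the entire technical content of the theorem, and your description of it is not correct as stated. Deciding whether $p\in P_n$ is a $\Pib^0_n$ question, so the requirement $R_n$ does \emph{not} have a finite-mind-change approximation along the stages of a ground-level construction; its guesses are only controlled relative to the $n$-th jump of the input. Consequently there is no single priority ordering under which ``each $R_n$ eventually settles'' in the finite-injury sense; one needs the full machinery of workers or Ash--Knight $\alpha$-systems (an $(n+1)$-approximation system at each level), plus an enumerated Scott set closed under Weak K\"onig's Lemma so that the finitely-committed configurations can always be extended to complete consistent level-$(n+1)$ data. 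Your ``switching lemma'' is a correct consequence of the hypothesis, but it only transfers finitely many existential facts at a time; it does not by itself guarantee that the limit structure satisfies exactly the complete theory $T$ (rather than some other theory agreeing with $T$ on the facts you happened to install), which is what the Scott-set bookkeeping in Knight's theorem is for. So the proposal identifies the right reduction but leaves its engine unproved, and the sketch of that engine would fail if implemented as described.
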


We show in the next example the necessity of the assumption of completeness
for the theory~$T$ in \cref{thm:Piomega}.

\begin{example}\label{ex:sigmaomega}
Let~$\mathcal{L}_k$ be disjoint relational languages, and for each~$k$,
let~$\phi_k$ be an $\mathcal{L}_k$-sentence which is~$\exists_k$ and not
equivalent to any $\forall_k$-sentence. Let $\mathcal{L} = \bigcup_k
\mathcal{L}_k \cup \{R_i\mid i\in \omega\}$ where each~$R_i$ is a new unary
relation. Let~$T$ say that the set of realizations of each~$R_i$ is disjoint,
and at most one is non-empty. Furthermore, let~$T$ say that any relation
from~$\mathcal{L}_k$ can only hold on tuples from the set of realizations
of~$R_k$. Let~$T$ further say that if~$R_k$ is non-empty, then~$\phi_k$
holds. Clearly, $T$ has no $\forall_n$-axiomatization for
any~$n$, and yet $\Mod(T)$ is~$\Sigmab^0_\omega$.
\end{example}

We defer the proof of Theorem~\ref{thm:TTn} to \cref{sec:proofs}. Here we
state some corollaries of Theorem~\ref{thm:Piomega}:

\begin{cor}\label{cor:PA}
For any completion~$T$ of Peano arithmetic~$\PA$, in particular for true
arithmetic, the set of models of~$T$ is $\Pib^0_\omega$-complete. \qed
\end{cor}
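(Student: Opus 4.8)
The plan is to derive this directly from \cref{thm:Piomega}. Any completion $T$ of $\PA$ is by definition a complete first-order theory, so it suffices to show that no completion of $\PA$ is boundedly axiomatizable; the conclusion that $\Mod(T)$ is $\Pib^0_\omega$-complete is then immediate. Equivalently, fixing a model $\M \models T$, I must show that for every $n$ the set of $\forall_n$-consequences of $T$ fails to entail $T$: by completeness of $T$ and a compactness argument, $T$ is $\forall_n$-axiomatizable precisely when its $\forall_n$-consequences prove every sentence of $T$, so it is enough to produce, for each $n$, a model of $T \cap \forall_n$ whose complete theory differs from $T$.

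First I would reduce matters to a separation statement about arithmetic: for each $n$ there is a complete theory $T_n \neq T$ with $T_n \cap \exists_n = T \cap \exists_n$ (note that for complete theories, agreeing on all $\exists_n$-sentences is the same as agreeing on all $\forall_n$-sentences, since the latter are exactly the negations of the former). Producing these $T_n$ not only shows that $T$ is not boundedly axiomatizable, but also verifies the hypotheses of \cref{thm:TTn} directly, giving a second route to the conclusion.

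The heart of the argument is the construction of such a $T_n$, and this is where the special structure of $\PA$ enters. The induction scheme of $\PA$ contains instances of unbounded quantifier complexity, and the finitely axiomatized fragments $I\Sigma_m$ (induction restricted to $\Sigma_m$-formulas) form a strictly increasing hierarchy with $\PA = \bigcup_m I\Sigma_m$ (Kirby--Paris). I would exploit this by taking a model $\M \models T$ and passing to a $\Sigma_n$-elementary substructure $\mathcal N \prec_n \M$ --- for instance a $\Sigma_n$-definable hull, which exists because $\PA$ has $\Sigma_n$-definable Skolem functions --- chosen so that $\mathcal N$ fails an instance of induction of complexity exceeding $\forall_n$ and $\exists_n$. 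Since $\mathcal N \prec_n \M$, the structures $\mathcal N$ and $\M$ agree on all $\exists_n$- and $\forall_n$-sentences, so $T_n := \Th(\mathcal N)$ satisfies $T_n \cap \exists_n = T \cap \exists_n$; but the failed induction instance is a sentence true in $\M$ and false in $\mathcal N$, so $T_n \neq T$. Alternatively, one can obtain $\mathcal N$ by compactness, building a model of $T \cap \forall_n$ together with the negation of a suitably chosen higher-complexity consequence of $T$ and checking consistency via the non-collapse of the arithmetical hierarchy in models of $\PA$.

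The main obstacle is exactly this construction: one must guarantee that the $\Sigma_n$-elementary substructure (equivalently, the model of the $\forall_n$-fragment) genuinely breaks the full theory rather than secretly reproducing it. This step is not formal and rests on the strictness of the induction hierarchy together with the fact that, in a model of $\PA$, an appropriate definable hull or initial segment can satisfy low-level induction while violating a higher instance; verifying these properties for a given completion $T$ --- including the nonstandard completions, where G\"odel-style consistency arguments are unavailable --- is the delicate point. The remaining implications (completeness of $T$, and the passage from the separation statement to $\Pib^0_\omega$-completeness via \cref{thm:Piomega} or \cref{thm:TTn}) are routine.
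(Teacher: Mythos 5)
Your top-level strategy coincides with the paper's: \cref{cor:PA} is obtained by feeding into \cref{thm:Piomega} the fact that no consistent extension of $\PA$ is boundedly axiomatizable. The paper does not prove that fact; it simply cites it as an observation of Rabin \cite{Ra61}, so had you done the same your argument would be complete and essentially identical to the paper's. (Your secondary route---verifying the hypotheses of \cref{thm:TTn} by exhibiting complete theories $T_n\neq T$ with $T_n\cap\exists_n=T\cap\exists_n$---is exactly how the paper handles the \emph{incomplete} theory $\PA$ in \cref{cor:incPA}, there using Parsons' theorem to obtain completions of $\TA\cap\exists_n$ in which $\mathsf{B}\Sigma^0_n$ fails.)

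The gap is in your attempted proof of Rabin's observation. The parameter-free $\Sigma_n$-definable hull fails for the most prominent case in the statement, $T=\TA$: if $\mathcal{M}\models\TA$, then every $\Sigma_n$-definable element of $\mathcal{M}$ is standard (existence and uniqueness of the witness transfer between $\mathcal{M}$ and $\mathbb{N}$ by elementary equivalence), so the hull is a copy of $\mathbb{N}$, which is again a model of $\TA$---nothing is ``broken.'' The standard repair is to take a nonstandard $\mathcal{M}\models T$, a nonstandard $a\in\mathcal{M}$, and the $\Sigma_n$-Skolem hull $\mathcal{N}$ of $\{a\}$; then $\mathcal{N}\prec_{\Sigma_n}\mathcal{M}$ gives $\Th(\mathcal{N})\cap\exists_n=T\cap\exists_n$, and one shows $\mathcal{N}\not\models\PA$ because a model of $\PA$ cannot be pointwise $\Sigma_n$-definable from a single parameter: using the $\Sigma_n$-truth predicate, the set of elements named by formulas with code below a fixed nonstandard bound would, by collection, be bounded in $\mathcal{N}$, yet it exhausts $\mathcal{N}$. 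Your sketch never introduces the nonstandard parameter and leaves exactly this step (that the hull ``genuinely breaks the full theory'') unargued; you flag it as the delicate point, but it is the entire content of the corollary beyond \cref{thm:Piomega}. The appeal to the strictness of the $I\Sigma_m$ hierarchy does not fill it either: that $\PA$ itself is not finitely (or boundedly) axiomatizable does not by itself address whether a fixed \emph{complete} extension admits a $\forall_n$-axiomatization.
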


This follows from an observation of Rabin~\cite{Ra61} (which he suspects to
have been known before) that no consistent extension of~PA can be boundedly
axiomatizable.

While \cref{ex:sigmaomega} shows that \cref{thm:Piomega} cannot be
generalized to hold for incomplete theories, for many incomplete theories,
one can use \cref{thm:TTn} to get a similar result. One simply has to find a
suitable completion~$T$ and suitable theories~$T_n$. One example of such a
theory is~PA.

\begin{cor}\label{cor:incPA}
Peano arithmetic has a $\Pib^0_\omega$-complete set of models.
\end{cor}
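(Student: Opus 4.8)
The plan is to apply Theorem~\ref{thm:TTn} to Peano arithmetic by choosing an appropriate complete completion~$T$ of~PA together with the required family~$\{T_n\}$, and then to push the reduction down from models of~$T$ to models of~PA. The key point is that Theorem~\ref{thm:TTn} already delivers, for each $\Pib^0_\omega$-set~$P$, a continuous map sending each $p\in P$ to a model of~$T$ and each $p\notin P$ to a model of some~$T_n$. Since every~$T_n$ is a completion of~PA (as will be arranged), every structure in the image of this map is a model of~PA. Hence the very same continuous map witnesses $P\leq_W\Mod(\PA)$, because a point lands in $\Mod(T)\subseteq\Mod(\PA)$ when $p\in P$ and lands in $\Mod(T_n)\subseteq\Mod(\PA)$ when $p\notin P$. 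This shows $\Mod(\PA)$ is $\Pib^0_\omega$-hard; together with the fact (noted in the Preliminaries) that $\Mod(T')$ is at most~$\Pib^0_\omega$ for any theory~$T'$, we conclude $\Mod(\PA)$ is $\Pib^0_\omega$-complete.

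\emph{First} I would fix~$T$ to be any completion of~PA, for instance true arithmetic~TA. \emph{Then}, to produce the theories~$T_n$, I would invoke the same input that underlies \cref{cor:PA}: by Rabin's observation~\cite{Ra61}, no consistent extension of~PA is boundedly axiomatizable. Concretely, since~$T$ has no $\forall_n$-axiomatization, for each~$n$ the $\forall_n$-consequences of~$T$ do not axiomatize~$T$, so there is a model of the $\forall_n$-fragment of~$T$ that is not a model of~$T$ itself; extending its theory to a completion~$T_n$ of~PA gives a complete theory with $T\cap\exists_n = T_n\cap\exists_n$ yet $T_n\neq T$. This is exactly the hypothesis of \cref{thm:TTn}. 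One subtlety is to ensure the~$T_n$ can be taken to extend~PA: since~$T$ extends~PA and the $\forall_n$-fragment we are matching includes the (finite, hence low-complexity) quantifier-bounded portion of the PA axioms, any completion of the constructed model's theory is automatically a completion of~PA.

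\emph{The main obstacle} I expect is verifying cleanly that the theories~$T_n$ supplied by non-bounded-axiomatizability can simultaneously be taken to be \emph{complete} extensions of~PA satisfying the exact $\exists_n$-agreement condition $T\cap\exists_n=T_n\cap\exists_n$, rather than merely agreeing on $\forall_n$-sentences. The agreement must be phrased so that~$T$ and~$T_n$ have identical $\exists_n$-consequences; this follows because a complete theory agreeing with~$T$ on all $\forall_n$-sentences must, by completeness and duality of the quantifier classes, also agree on all $\exists_n$-sentences. Once this bookkeeping is in place, the reduction is immediate and no new descriptive-set-theoretic work is needed beyond \cref{thm:TTn}. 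Thus the corollary reduces entirely to the combinatorial fact about extensions of~PA, which is precisely Rabin's theorem.
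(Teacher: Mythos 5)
Your reduction does not work as stated: the direction of the Wadge reduction is broken. A continuous map $f$ witnesses $P\leq_W\Mod(\PA)$ only if $p\in P$ \emph{if and only if} $f(p)\in\Mod(\PA)$. You arrange for every $T_n$ to be a completion of~$\PA$, so that \emph{both} outcomes of the map from \cref{thm:TTn} land inside $\Mod(\PA)$; such a map reduces only the trivial set $2^\omega$ to $\Mod(\PA)$ and gives no hardness at all. What is actually needed is the opposite: the theories $T_n$ must agree with $T=\TA$ on $\exists_n$-sentences (so that \cref{thm:TTn} applies) while being \emph{inconsistent with}~$\PA$, so that the $p\notin P$ outcome falls outside $\Mod(\PA)$. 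This is a nontrivial requirement, and it is the real content of the corollary; it does not follow from Rabin's observation alone. The paper obtains it from a theorem of Parsons: for each~$n$ there is a consistent completion $T_n$ of $\TA\cap\exists_n$ in which the bounding principle $\mathsf{B}\Sigma^0_n$ fails. Since $\PA\vdash\mathsf{B}\Sigma^0_n$, no model of such a $T_n$ models~$\PA$, and the map from \cref{thm:TTn} then sends $p\in P$ to a model of $\TA\subseteq\Mod(\PA)$ and $p\notin P$ to a model of some $T_n$, hence outside $\Mod(\PA)$.

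A secondary error: your claim that any completion of the theory of a model of $\Th_{\forall_n}(\TA)$ ``is automatically a completion of $\PA$'' is false. The induction axioms of $\PA$ have unbounded quantifier complexity, so agreeing with $\TA$ on $\forall_n$- (equivalently $\exists_n$-) sentences in no way forces the induction scheme to hold; indeed the existence of the Parsons-style theories $T_n$ above shows such completions can outright refute consequences of $\PA$. (Your observation that, for complete theories, agreement on $\forall_n$-sentences is equivalent to agreement on $\exists_n$-sentences is correct, but it does not repair the main gap.)
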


\begin{proof}
Let $T = \TA$, and let~$T_n$ be a consistent completion of $\TA \cap
\exists_n$, where $\mathsf{B}\Sigma^0_n$, the bounding principle for
$\Sigma^0_n$-formulas, fails. That such~$T_n$ exists for every~$n$ follows
from a result by Parsons~\cite{Pa70}, see also~\cite[Theorem 10.4]{Ka91}.
Using \cref{thm:TTn} with this~$T$ and $(T_n)_{n\in\omega}$, we get that
$\Mod(\PA)$ is $\Pib^0_\omega$-complete.
\end{proof}

Answering our question about other theories of arithmetic, Enayat and Visser
\cite{EVta} showed that no complete sequential theory can be boundedly
axiomatizable. The sequential theories were first defined by
Pudl\'ak~\cite{Pu83} and rephrased by Pakhomov and Visser~\cite{PV22} as
follows:

\begin{defn}\label{def:seq}
%\begin{enumerate}
%\item
%A theory~$T$ \emph{directly interprets} a theory~$T'$ if any model of~$T$
%can be interpreted in a model of~$T'$ without parameters and preserving
%identity (i.e., without going to a quotient).
%FROM URI: Interprets vs interpreted by? Where do the quantifiers go? Each
%T has a T' or every T' is coded in a T?  %\item
Given a theory~$T$, we denote by $\AS(T)$ (\emph{adjunctive set theory}) the
extension of~$T$ by a new binary relation symbol~$\in$ and the axioms
%\emph{Adjunctive Set Theory} ($\AS$) is the theory in one binary
%relation~$\in$ satisfying the following two axioms:
\begin{itemize}
\item
$\AS1$: $\exists x\, \forall y\, (y \notin x)$, and
\item
$\AS2$: $\forall x\, \forall y\, \exists z\, \forall u\, (u \in z
        \leftrightarrow (u \in x \vee u=y))$.
\end{itemize}
%\item
A theory is \emph{sequential} if it allows a definitional extension
to~$\AS(T)$.
%\end{enumerate}
\end{defn}

Note here that Adjunctive Set Theory does not even require Extensionality.
Pudl\'ak's original definition was in terms of being able to define G\"odel's
$\beta$-function, which then allows for a weak coding of sequences. (Note
that any extension of a sequential theory is again sequential.) Examples of
sequential theories include~$\PA^-$ (by Je\v r\'abek~\cite{Je12}) and
essentially all versions of set theory, but not Robinson's~$\mathrm Q$ (by
Visser~\cite{Vi17}). Thus, Enayat and Visser's result \cite{EVta} yields that
$\Mod(T)$ is $\Pib^0_\omega$-complete for essentially any ``foundational''
complete theory, in particular, any completion of~$\PA^-$.

Finally, note that our reduction in \cref{thm:TTn} produces models of
different theories~$T_n$ for different~$x$ in the $\Sigmab^0_\omega$-outcome,
depending on how we witness that $x \notin P$. We note that it is necessary
to use infinitely many theories in the $\Sigmab^0_\omega$-outcome, since the
union of $\Mod(T_i)$ for finitely many theories~$T_i$ is
always~$\Pib^0_\omega$, and the complement of $P$ could be $\Sigmab^0_\omega$-hard.
%FROM URI: Rephrased from the below for conciseness.
%Revert if you feel stylistically better the other way.

%A natural attempt at improving
%\cref{thm:TTn} would be to let the structure $\M_x \models T_2$ for all $x
%\notin P$ and a fixed theory~$T_2$. Any such theory~$T_2$ would have a
%$\Sigmab^0_\omega$-hard set of models. But this is impossible, as the set
%of models of any theory is naturally~$\Pib^0_\omega$ and hence, by Wadge's
%lemma, cannot be $\Sigmab^0_\omega$-hard. Thus, one cannot improve
%\cref{thm:TTn} by producing models of one or even finitely many theories in
%the $\Sigmab^0_\omega$-outcome.

\section{Theories with bounded axiomatization}\label{sec:bound}

In this section, we will present results on the Wadge degrees of models of
first-order theories with bounded axiomatization via the quantifier
complexity of their axiomatizations. Our proofs will rely on the following
lemma that will rely on theorems of Knight and Solovay.
%FROM URI: Really, it just relies directly on Solovay's theorem.
%We're not doing a worker argument.
We delay its proof to \cref{sec:proofs}.

\begin{lem}\label{lem:corelemma}
Suppose $n \geq 1$ and~$T^+$ and~$T^-$ are distinct complete theories such
that $T^- \cap \exists_n \subseteq T^+ \cap \exists_n$. Then, for any $P \in
\Sigmab^0_n$, there is a Wadge reduction~$f$ such that $f(p) \in \Mod(T^+)$
if $p\in P$, and $f(p) \in \Mod(T^-)$ otherwise. In particular, $\Mod(T^+)$
is $\Sigmab^0_n$-hard, and $\Mod(T^-)$ is $\Pib^0_n$-hard.
\end{lem}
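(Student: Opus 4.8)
The plan is to build a continuous reduction $f$ from Cantor space into $\Mod(\tau)$ that, given an oracle $p$, constructs a structure whose complete theory is forced to be either $T^+$ or $T^-$, with the choice governed by whether $p\in P$. Since $P\in\Sigmab^0_n$, I would write $P$ in the normal form for $\Sigmab^0_n$ sets and use this to drive a priority-style construction: at each stage, based on what the $\Sigmab^0_n$-approximation of ``$p\in P$'' currently believes, the construction commits to building (an elementary submodel of) a model of $T^+$ or of $T^-$, but in a way that these commitments can be revised finitely often as the approximation changes.

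The crucial leverage is the hypothesis $T^-\cap\exists_n\subseteq T^+\cap\exists_n$ together with the distinctness of $T^+$ and $T^-$. Because the two theories agree on the relevant low-complexity existential sentences, a model being built toward $T^-$ can always be ``promoted'' toward $T^+$ without having to undo the existential facts already committed: any $\exists_n$-sentence of $T^-$ that has been realized is also consistent with $T^+$. Conversely, the finitely many revisions permitted by a $\Sigmab^0_n$-set correspond exactly to the quantifier-alternation budget that lets us first satisfy the $\exists_n$-requirements of the intended theory and then, if the outcome flips, start satisfying the complementary requirements. I would invoke the Knight--Solovay machinery (as promised in the text) to package this: these results let one code a $\Sigmab^0_n$-outcome into which of two theories a constructed model satisfies, provided the theories are arranged so their $\exists_n$-fragments are nested as stated. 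Concretely, I would set up requirements $R_\phi$ indexed by sentences $\phi$, assigning each a priority, and have each requirement act to insert a witness or a constraint realizing $\phi$ according to the currently guessed theory; the nesting hypothesis guarantees no two active requirements of different theories conflict on $\exists_n$-sentences.

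I would then verify the two defining properties of $f$. For continuity, each bit $D(f(p))(i)$ depends only on finitely much of $p$ because the priority construction settles each atomic fact after finitely many stages that inspect only a finite initial segment of the $\Sigmab^0_n$-approximation of $p$. For correctness, I would argue by the standard true-stage/true-outcome analysis: if $p\in P$, the approximation eventually stabilizes on the ``in'' outcome, all $T^+$-requirements act cofinally and are met, so the limit structure is a model of $T^+$; if $p\notin P$, the ``out'' outcome wins, only finitely many $T^+$-requirements ever acted, and the tail of the construction drives the structure to a model of $T^-$. The hardness conclusions follow immediately: $\Mod(T^+)$ is $\Sigmab^0_n$-hard and (by taking complements, since every $\Pib^0_n$-set is the complement of a $\Sigmab^0_n$-set and the roles of $T^+,T^-$ swap) $\Mod(T^-)$ is $\Pib^0_n$-hard.

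The main obstacle I expect is ensuring that the elementary type of the limit structure is genuinely complete and equal to the target theory, rather than merely consistent with it. Satisfying the $\exists_n$-requirements is the easy direction; the difficulty is controlling the higher-quantifier-complexity sentences of $T^+$ (or $T^-$) during the infinitely many potential revisions, so that a requirement's witness is not later spoiled by a higher-priority requirement acting for the competing theory. This is precisely where the $T^-\cap\exists_n\subseteq T^+\cap\exists_n$ hypothesis and the careful injury/priority bookkeeping of the Knight--Solovay construction must be combined, and I would expect the bulk of the technical work to lie in showing that injuries are bounded per requirement and that the surviving commitments suffice to pin down the full theory of the limit model.
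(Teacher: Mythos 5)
Your overall strategy --- drive an iterated priority / Knight--Solovay construction by an approximation to ``$p\in P$'' and use the inclusion $T^-\cap\exists_n\subseteq T^+\cap\exists_n$ to allow a promotion from $T^-$ to $T^+$ --- matches the paper's, which black-boxes all of the priority machinery into a uniform version of Knight's theorem. But that theorem demands very specific input data: for \emph{every} level $k$, a $\Delta^0_k$-in-the-oracle approximation $t_k(s)$ that converges to an $r$-index for $T\cap\exists_k$ (for the target theory $T$) and such that every intermediate value $t_k(s)$ already indexes a \emph{subset} of $T\cap\exists_k$. Two things in your sketch fail against that specification. First, your injury analysis has the revisions going in the wrong direction: you allow the guess to flip ``finitely often'' and, in the $p\notin P$ case, permit ``finitely many $T^+$-requirements'' to have acted before the construction settles on $T^-$. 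The hypothesis is one-sided: an $\exists_n$-sentence of $T^+$ realized during a spurious $T^+$-phase need not lie in $T^-$, so such commitments can never be retracted into a model of $T^-$. The level-$n$ guess must be \emph{monotone}: the paper writes $P=\bigcup_t P_t$ with the $P_t$ uniformly $\Delta^0_n(c)$, outputs an index for $T^-\cap\exists_n$ until (if ever) $p\in\bigcup_{t<s}P_t$ is observed, and then permanently switches to an index for $T^+\cap\exists_n$; the guess changes at most once, the subset condition is exactly $T^-\cap\exists_n\subseteq T^+\cap\exists_n$, and in the $p\notin P$ case no $T^+$-commitment is ever made.

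Second, you never say what happens at levels $k\neq n$, and that is where the rest of the data for Knight's theorem comes from. For $k<n$, completeness together with the level-$n$ inclusion forces $T^-\cap\exists_k=T^+\cap\exists_k$ (if $\phi\in T^+\cap\exists_k\setminus T^-$, then $\neg\phi\in T^-\cap\exists_n\subseteq T^+$, a contradiction), so a single fixed index works; for $k>n$, the oracle $(y\oplus p)^{(k-1)}$ can outright decide whether $p\in P$ and commit to the correct theory from the outset. Without this tower of level-by-level approximations, each computable from the appropriate jump of the oracle uniformly, there is no way to conclude that the limit structure has a \emph{complete} theory equal to $T^+$ or $T^-$ --- which is precisely the obstacle you flag at the end but do not resolve. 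Assembling these approximation functions, rather than running a fresh requirement-by-requirement priority argument, is the actual content of the paper's proof.
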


To apply \cref{lem:corelemma} to incomplete theories, we use the
following Lemma which allows us to find completions satisfying the hypotheses
of \cref{lem:corelemma}.

\begin{defn}
A \emph{level-sentence set} for~$\mathcal L$ is either the set
of~$\exists_n$- or the set of $\forall_n$-sentences in~$\mathcal L$ for
some~$n$. 	

For a level-sentence set~$\Lambda$, we let $\neg\Lambda$ be the set of
sentences equivalent to the negation of a sentence in~$\Lambda$.
\end{defn}

%FROM URI: In what's below, $\Lambda \cap A\cup \{\neg \phi\}$ isn't quite the
%right thing, I think, since we really want the set of $\Lamda-consequences of
%$A\cup \{\neg \phi\}$. As awkward as it is to write, I think
%$Th_\Lambda(A\cup \{\neg\phi\})$ was probably right.

%FROM HONGYU: Reverting to $\Th_\Lambda(A\cup \{\neg\phi\})$ in the lemma
%and introducing the notation.
\begin{defn}
For a (possibly incomplete) theory~$T$, and a level-sentence set~$\Lambda$,
we let $\Th_\Lambda(T)$ denote $\Lambda\cap\overline{T}$,
where~$\overline{T}$ is the deductive closure of~$T$.
\end{defn}

\begin{lem}\label{lem:hongyu}
Let~$\Lambda$ be a level-sentence set for~$\mathcal{L}$. Let~$A$ be a set of
finitary sentences and~$\phi$ a finitary sentence such that $A \not\vdash
\phi \leftrightarrow \psi$ for any $\psi \in \neg\Lambda$. Then there are
complete consistent theories $T^+ \supseteq A \cup \{\phi\}$ and $T^-
\supseteq A\cup \{\neg \phi\}$ such that $\Lambda\cap T^-\subseteq
\Lambda\cap T^+$. Furthermore, if~$T$ is any theory consistent with $A
\cup \{\phi\} \cup \Th_\Lambda(A\cup \{\neg\phi\})$, then~$T^+$ can be
chosen to contain~$T$.
\end{lem}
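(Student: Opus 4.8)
The plan is to construct $T^+$ first and then $T^-$, reducing the whole statement to two consistency claims together with routine completions by Lindenbaum's lemma. Throughout I will use the standard fact that each level-sentence set $\Lambda$ (and hence also $\neg\Lambda$) is closed under finite conjunctions and disjunctions up to logical equivalence, and that the negation of a $\neg\Lambda$-sentence is equivalent to a $\Lambda$-sentence. Write $\Sigma := \Th_\Lambda(A\cup\{\neg\phi\})$ for the set of $\Lambda$-sentences provable from $A\cup\{\neg\phi\}$.

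The first claim is that $A\cup\{\phi\}\cup\Sigma$ is consistent; this is where the hypothesis enters. If it were inconsistent, then by compactness and closure of $\Lambda$ under finite conjunction there is a single $\sigma\in\Sigma$ (a conjunction of finitely many members) with $A\cup\{\phi\}\vdash\neg\sigma$, while $A\cup\{\neg\phi\}\vdash\sigma$ by the definition of $\Sigma$. The deduction theorem turns these into $A\vdash\phi\to\neg\sigma$ and $A\vdash\neg\sigma\to\phi$, hence $A\vdash\phi\leftrightarrow\neg\sigma$ with $\neg\sigma\in\neg\Lambda$, contradicting the assumption that $\phi$ is not equivalent over $A$ to any sentence in $\neg\Lambda$. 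Having established consistency, I define $T^+$ to be any complete consistent extension of $A\cup\{\phi\}\cup\Sigma$. For the ``furthermore'', when $T$ is consistent with $A\cup\{\phi\}\cup\Sigma$ I instead take $T^+$ to be a completion of $T\cup A\cup\{\phi\}\cup\Sigma$; nothing below uses more about $T^+$ than that it is complete, consistent, and contains $A\cup\{\phi\}\cup\Sigma$.

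Now set $N := \{\psi\in\neg\Lambda : T^+\vdash\psi\}$ and claim that $A\cup\{\neg\phi\}\cup N$ is consistent. If not, then as before some finite conjunction $\psi\in N$ satisfies $A\cup\{\neg\phi\}\vdash\neg\psi$; since $\neg\psi$ is equivalent to a $\Lambda$-sentence provable from $A\cup\{\neg\phi\}$, this puts $\neg\psi\in\Sigma\subseteq T^+$, so $T^+\vdash\neg\psi$, while $T^+\vdash\psi$ by definition of $N$, contradicting the consistency of $T^+$. I then let $T^-$ be any complete consistent extension of $A\cup\{\neg\phi\}\cup N$. Finally I verify $\Lambda\cap T^-\subseteq\Lambda\cap T^+$: if $\sigma\in\Lambda\cap T^-$ but $\sigma\notin T^+$, then completeness of $T^+$ gives $\neg\sigma\in T^+$, so $\neg\sigma\in N\subseteq T^-$, contradicting $\sigma\in T^-$.

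The argument is short once set up correctly, so the main obstacle is conceptual rather than computational: one must notice that $\Lambda\cap T^-$ can properly exceed $\Sigma=\Th_\Lambda(A\cup\{\neg\phi\})$, so it is not enough to feed $\Sigma$ into $T^+$ and then complete $T^-$ freely. The fix is the asymmetric two-step construction above, in which $T^-$ is forced to agree with all of the negative $\Lambda$-information of the already-fixed $T^+$ (via $N$), and the two consistency claims are exactly the symmetric pair that the hypothesis and the inclusion $\Sigma\subseteq T^+$ respectively guarantee. The only point needing care in the routine steps is the closure of $\Lambda$ and $\neg\Lambda$ under the finite Boolean operations used when passing to the single sentences $\sigma$ and $\psi$.
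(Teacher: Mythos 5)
Your proof is correct and follows essentially the same route as the paper's: establish consistency of $A\cup\{\phi\}\cup\Th_\Lambda(A\cup\{\neg\phi\})$ via compactness and the non-equivalence hypothesis, complete it to $T^+$ (folding in $T$ for the ``furthermore''), then show $(\neg\Lambda\cap T^+)\cup A\cup\{\neg\phi\}$ is consistent and complete it to $T^-$. The only cosmetic difference is that you make explicit the closure of $\Lambda$ and $\neg\Lambda$ under finite conjunctions when reducing to a single sentence, which the paper leaves implicit.
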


\begin{proof}
The lemma follows from the following two claims that allow us to choose
such~$T^+$ and~$T^-$.

\begin{claim}
The theory $A \cup \{\phi\} \cup \Th_\Lambda(A\cup \{\neg\phi\})$ is
consistent.
\end{claim}

\begin{proof}
Suppose that $A \cup \{\phi\} \cup \Th_\Lambda(A\cup \{\neg\phi\})$ is
inconsistent.
%FROM URI: I think the following isn't needed.
%Note that $A \cup \{\phi\}$ must be consistent as otherwise $A
%\vdash \phi \leftrightarrow \theta$ for a sentence~$\theta\in \Lambda$
%(either $\exists x(x\neq x)$ or $\forall x (x\neq x)$). Thus,
By compactness, there is $\psi \in \Th_\Lambda(A\cup \{\neg\phi\})$ such
that $A \cup \{\phi\} \vdash \neg\psi$. But then $A \vdash \phi
\leftrightarrow \neg\psi$ as well as $\neg\psi \in \neg\Lambda$, a
contradiction.
\end{proof}

Now, choose~$T^+$ to be any complete
%FROM URI: Adding "consistent"
consistent extension of $A \cup \{\phi\} \cup
\Th_\Lambda(A\cup \{\neg\phi\})$. Observe that if~$T$ is any theory
consistent with $A \cup \{\phi\} \cup \Th_\Lambda(A\cup \{\neg\phi\})$,
then~$T^+$ can be chosen to contain~$T$.

\begin{claim}\label{cl:buildingT-}
The theory $(\neg\Lambda\cap T^+)\cup A\cup \{\neg\phi\}$ is consistent.
\end{claim}

\begin{proof}
Suppose not, then by compactness, there is $\psi \in \neg\Lambda\cap T^+$
such that $A\cup \{\neg\phi\} \vdash \neg\psi$. But then $\neg\psi \in
\Lambda$ and $A \cup \{\neg\phi\} \vdash \neg\psi$, so $\neg\psi \in T^+$,
contradicting that~$T^+$ is consistent.
%But then $\neg\psi \in (\Lambda\cap(A \cup \{\neg\phi\})) \subseteq T^+$,
%contradicting that~$T^+$ is consistent.
%FROM STEFFEN: I am not sure I see the above: not psi is in Lambda and is a
%consequence of A union not phi, but I don't see why it needs to be an
%element of that set. (Of course, it's still in T^+ since T^+ is closed
%under consequence but A union not phi need not be! Revert if I am dense!
\end{proof}

Let~$T^-$ be a completion of $(\neg\Lambda\cap T^+) \cup A \cup
\{\neg\phi\}$. Observe that~$T^-$ and~$T^+$ satisfy the lemma.
\end{proof}

%FROM HONGYU: Reverted to $\Th_{\Lambda}$ notation for incomplete theories.

\begin{cor}\label{cor:Sigma+PiSimultaneously}
Let~$\Lambda$ be a level-sentence set, and let~$T$ be a theory which is not
$\Lambda$-axiomatizable (i.e., $\Th_{\Lambda}(T)$ does not imply all of~$T$).
Then there are complete theories $T_0,T_1$ such that $T\subseteq T_0$,~$T$ is
inconsistent with~$T_1$, and $\Lambda\cap T_0 \subseteq
\Lambda\cap T_1$.
\end{cor}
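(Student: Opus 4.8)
The plan is to extract a single finitary witness of non-$\Lambda$-axiomatizability and then run the completion construction of \cref{lem:hongyu}, but applied to the \emph{dual} level-sentence set $\neg\Lambda$ so that the theory extending~$T$ lands on the side with the smaller $\Lambda$-content. Since~$T$ is not $\Lambda$-axiomatizable, $\Th_\Lambda(T)$ does not imply all of~$\ol T$, so by compactness there is a finitary sentence~$\sigma$ with $T\vdash\sigma$ but $\Th_\Lambda(T)\not\vdash\sigma$. Set $A:=\Th_\Lambda(T)$; then $A\subseteq\ol T$ and $A\cup\{\neg\sigma\}$ is consistent.

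First I would build~$T_1$: let it be any completion of $A\cup\{\neg\sigma\}$. Since $T\vdash\sigma$ while $T_1\vdash\neg\sigma$, the theory~$T_1$ is inconsistent with~$T$, as required. The heart of the argument is producing $T_0\supseteq T$ whose $\Lambda$-part sits inside that of~$T_1$. I would take~$T_0$ to be a completion of $T\cup(\neg\Lambda\cap T_1)$, mimicking the way \cref{lem:hongyu} forces its two completions to agree on the complementary pointclass.

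The main obstacle — and the only nontrivial step — is checking that $T\cup(\neg\Lambda\cap T_1)$ is consistent. Suppose not; by compactness, and closure of $\neg\Lambda$ under finite conjunction up to equivalence, there is $\theta\in\neg\Lambda\cap T_1$ with $T\vdash\neg\theta$. Then $\neg\theta$ is equivalent to a sentence in~$\Lambda$, so $\neg\theta\in\Th_\Lambda(T)=A\subseteq T_1$, contradicting $\theta\in T_1$ together with the consistency of~$T_1$. Granting consistency, $T_0$ exists and satisfies $T_0\supseteq T$. To verify $\Lambda\cap T_0\subseteq\Lambda\cap T_1$, take $\psi\in\Lambda\cap T_0$; if $\psi\notin T_1$, then by completeness $\neg\psi\in T_1$, so $\neg\psi\in\neg\Lambda\cap T_1\subseteq T_0$, contradicting $\psi\in T_0$ and consistency of~$T_0$. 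Hence $\psi\in T_1$, proving the inclusion.

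Equivalently, one can phrase the whole construction as a single invocation of \cref{lem:hongyu} with level-sentence set $\neg\Lambda$ (which is again a level-sentence set up to logical equivalence), base set $A=\Th_\Lambda(T)$, and sentence $\phi=\sigma$: the hypothesis $A\not\vdash\sigma\leftrightarrow\psi$ for $\psi\in\neg(\neg\Lambda)=\Lambda$, and the consistency required by the ``furthermore'' clause, both reduce to $\Th_\Lambda(T)\not\vdash\sigma$. The lemma then yields $T^+\supseteq T$ and $T^-$ with $\neg\Lambda\cap T^-\subseteq\neg\Lambda\cap T^+$; setting $T_0=T^+$ and $T_1=T^-$ and complementing this inclusion gives $\Lambda\cap T_0\subseteq\Lambda\cap T_1$. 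The one point that needs care throughout is the direction of the $\Lambda$-containment: a naive application of \cref{lem:hongyu} to~$\Lambda$ itself places the theory extending~$T$ on the wrong side, which is exactly why passing to the dual level-sentence set is needed.
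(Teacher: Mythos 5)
Your proof is correct and follows essentially the same route as the paper: the paper also picks $\phi\in T$ with $\Th_\Lambda(T)\not\vdash\phi$ and applies \cref{lem:hongyu} to the dual level-sentence set $\neg\Lambda$ with $A=\Th_\Lambda(T)$, using the ``furthermore'' clause to put $T$ inside $T^+$. Your direct construction (building $T_1$ first and then completing $T\cup(\neg\Lambda\cap T_1)$) just unwinds that lemma in dualized order, and your consistency check is the same compactness argument the paper uses.
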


\begin{proof}
Let $A = \Th_{\Lambda}(T)$, and let $\phi \in T$ be such that $A \not\vdash
\phi$. Observe that $A \not\vdash \phi \leftrightarrow \psi$ for any $\psi \in
\Lambda$, since otherwise~$\psi$ would be in $\Th_{\Lambda}(T) = A$,
contradicting $A \not\vdash \phi$.

%$T$ would prove $\psi$, so $\psi$ would be in $A$ and $A$ would prove $\phi$
%after all.

Observe also that $T \cup \Th_{\neg\Lambda}(A \cup \{\neg \phi\})$ is
consistent. Otherwise, there would be a formula $\psi \in \Lambda$ such that
$T \vdash \psi$, thus $\psi \in A$, and $A \cup \{\neg\phi\} \models
\neg\psi$. But then $A \vdash \phi$, which is a contradiction. So, we can
apply \cref{lem:hongyu} to the triple $\neg\Lambda, A, \phi$ to get two
complete theories $T^-\supseteq A \cup \{\neg \phi\}$ and $T^+ \supseteq T$
with $\neg\Lambda\cap T^- \subseteq \neg \Lambda\cap T^+$. Finally, let $T_0
= T^+$ and $T_1 = T^-$.
%FROM URI: I added a $\neg$ on the right side of the containment right
%above this comment.
\end{proof}

%FROM HONGYU: Changed $\Th_\Lambda(T)$ notation to $\Lambda\cap T$ since
%the theories below are complete.

\begin{lem}\label{lem:sigma_n}
Let~$T$ be a theory without a $\forall_n$-axiomatization. Then $\Mod(T)$ is
$\Sigmab^0_n$-hard.
\end{lem}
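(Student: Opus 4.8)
The plan is to derive the statement directly from \cref{cor:Sigma+PiSimultaneously} and \cref{lem:corelemma}, taking the level-sentence set~$\Lambda$ to be the set of $\forall_n$-sentences (so that $\neg\Lambda$ is, up to logical equivalence, the set of $\exists_n$-sentences). First I would observe that the hypothesis ``$T$ has no $\forall_n$-axiomatization'' is exactly the hypothesis that~$T$ is not $\Lambda$-axiomatizable for this~$\Lambda$: a theory is deductively equivalent to its set of $\forall_n$-consequences precisely when such an axiomatization exists, so the absence of one means $\Th_\Lambda(T)\not\vdash T$. Applying \cref{cor:Sigma+PiSimultaneously} then produces complete theories $T_0\supseteq T$ and~$T_1$ inconsistent with~$T$ satisfying $\forall_n\cap T_0\subseteq\forall_n\cap T_1$.

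Next I would convert this containment on $\forall_n$-sentences into the containment on $\exists_n$-sentences required by \cref{lem:corelemma}. Since $T_0$ and~$T_1$ are complete and the negation of a $\forall_n$-sentence is (equivalent to) an $\exists_n$-sentence and conversely, passing to contrapositives shows $\forall_n\cap T_0\subseteq\forall_n\cap T_1$ is the same statement as $\exists_n\cap T_1\subseteq\exists_n\cap T_0$: if~$\chi$ is an $\exists_n$-sentence in~$T_1$, then the $\forall_n$-sentence~$\neg\chi$ is not in~$T_1$, hence not in~$T_0$ by the containment, so $\chi\in T_0$ by completeness. Setting $T^+=T_0$ and $T^-=T_1$, the hypotheses of \cref{lem:corelemma} are met (we have $n\geq 1$, and $T^+\neq T^-$ since $T^+$ is consistent with~$T$ while $T^-$ is not).

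Finally I would check that the reduction furnished by \cref{lem:corelemma} already witnesses hardness for $\Mod(T)$ itself, not merely for $\Mod(T^+)$. Given an arbitrary $P\in\Sigmab^0_n$, let~$f$ be the associated Wadge reduction, so $f(p)\in\Mod(T^+)$ for $p\in P$ and $f(p)\in\Mod(T^-)$ for $p\notin P$. Because $T\subseteq T^+$, every model of~$T^+$ is a model of~$T$, giving $f(p)\in\Mod(T)$ when $p\in P$; because~$T$ is inconsistent with $T^-=T_1$, no model of~$T^-$ is a model of~$T$, giving $f(p)\notin\Mod(T)$ when $p\notin P$. Thus $f$ witnesses $P\leq_W\Mod(T)$, and since~$P$ was arbitrary, $\Mod(T)$ is $\Sigmab^0_n$-hard.

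The substantive content is carried entirely by the two cited results, so I expect no genuine obstacle; the only step needing care is the $\forall_n/\exists_n$ duality under completeness and the resulting orientation of the reduction, ensuring that the ``in~$P$'' outcome lands inside $\Mod(T)$ while the ``not in~$P$'' outcome lands in a theory incompatible with~$T$.
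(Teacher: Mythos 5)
Your proposal is correct and follows essentially the same route as the paper: apply \cref{cor:Sigma+PiSimultaneously} with $\Lambda$ the $\forall_n$-sentences, flip the containment to $\exists_n\cap T_1\subseteq\exists_n\cap T_0$ by completeness, and invoke \cref{lem:corelemma}. The extra care you take in noting that the reduction lands in $\Mod(T)$ (not merely $\Mod(T_0)$) because $T\subseteq T_0$ and $T$ is inconsistent with $T_1$ is exactly the point the paper leaves implicit.
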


\begin{proof}
By \cref{cor:Sigma+PiSimultaneously}, we have complete theories $T_0
\supseteq T$ and~$T_1$ inconsistent with~$T$ such that $\forall_n \cap T_0
\subseteq \forall_n \cap T_1$. Thus $\exists_n \cap T_1 \subseteq
\exists_n \cap T_0$, and applying \cref{lem:corelemma} shows that $\Mod(T)$
is $\Sigmab^0_n$-hard.
\end{proof}

%FROM HONGYU: Changed $\Th_\Lambda(T)$ notation to $\Lambda\cap T$ since
%the theories below are complete, similar to the previous lemma.

\begin{lem}\label{lem:pi_n}
Let~$T$ be a theory without an $\exists_n$-axiomatization. Then $\Mod(T)$ is
$\Pib^0_{n}$-hard.
\end{lem}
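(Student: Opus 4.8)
The plan is to run the exact dual of the argument for \cref{lem:sigma_n}, now applying \cref{cor:Sigma+PiSimultaneously} with the level-sentence set $\Lambda = \exists_n$ and then composing the resulting reduction with complementation. First I would observe that ``$T$ has no $\exists_n$-axiomatization'' is precisely the hypothesis that $T$ is not $\exists_n$-axiomatizable, i.e., that $\Th_{\exists_n}(T)$ does not imply all of~$T$. Thus \cref{cor:Sigma+PiSimultaneously} applied to~$T$ with $\Lambda=\exists_n$ yields complete theories $T_0 \supseteq T$ and~$T_1$ inconsistent with~$T$ such that $\exists_n \cap T_0 \subseteq \exists_n \cap T_1$.

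Next I would feed these into \cref{lem:corelemma}, but with the roles \emph{reversed} relative to the $\Sigmab^0_n$ case: set $T^+ = T_1$ and $T^- = T_0$. The hypothesis $T^- \cap \exists_n \subseteq T^+ \cap \exists_n$ of \cref{lem:corelemma} then reads exactly as the inclusion $\exists_n \cap T_0 \subseteq \exists_n \cap T_1$ just obtained, so it is satisfied. For an arbitrary $P \in \Sigmab^0_n$, the lemma produces a Wadge reduction~$f$ with $f(p) \in \Mod(T_1)$ when $p \in P$ and $f(p) \in \Mod(T_0)$ when $p \notin P$. Since $T_0 \supseteq T$, every model of~$T_0$ is a model of~$T$, whereas $T_1$ is inconsistent with~$T$, so $\Mod(T_1)$ is disjoint from $\Mod(T)$. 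Hence $p \notin P$ if and only if $f(p) \in \Mod(T)$, so~$f$ is a Wadge reduction of the complement of~$P$ to $\Mod(T)$. As~$P$ ranges over all $\Sigmab^0_n$-sets, its complement ranges over all $\Pib^0_n$-sets, giving $\Pib^0_n$-hardness of $\Mod(T)$.

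The only real subtlety, and the step most worth double-checking, is the bookkeeping of which completion occupies the $T^+$ and $T^-$ slots: the direction of the inclusion delivered by \cref{cor:Sigma+PiSimultaneously} forces~$T_0$ into the $T^-$ slot, which is what makes the $p \notin P$ branch land inside $\Mod(T)$. Note that it is the \emph{explicit} behavior of the reduction~$f$ that is needed here, not merely the ``in particular'' clause of \cref{lem:corelemma}, which by itself only asserts that $\Mod(T_0)$ is $\Pib^0_n$-hard; the inclusion $\Mod(T_0) \subseteq \Mod(T)$ together with the disjointness of $\Mod(T_1)$ from $\Mod(T)$ is precisely what transfers the hardness from $\Mod(T_0)$ to $\Mod(T)$ itself.
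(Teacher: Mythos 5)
Your proof is correct and follows essentially the same route as the paper's: apply \cref{cor:Sigma+PiSimultaneously} with $\Lambda = \exists_n$ to obtain complete theories $T_0 \supseteq T$ and $T_1$ inconsistent with~$T$ satisfying $\exists_n \cap T_0 \subseteq \exists_n \cap T_1$, and then invoke \cref{lem:corelemma} with $T^+ = T_1$ and $T^- = T_0$. The paper's one-line proof leaves implicit precisely the bookkeeping you spell out --- that one needs the explicit behavior of the reduction~$f$ (mapping $P$ into $\Mod(T_1)$, disjoint from $\Mod(T)$, and the complement into $\Mod(T_0) \subseteq \Mod(T)$), rather than the ``in particular'' clause alone, to transfer hardness from $\Mod(T_0)$ to $\Mod(T)$ --- so your write-up is a faithful expansion of the same argument rather than a different one.
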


\begin{proof}
By \cref{cor:Sigma+PiSimultaneously}, we have complete theories $T_0
\supseteq T$ and~$T_1$ inconsistent with~$T$ such that $\exists_n \cap T_0
\subseteq \exists_n \cap T_1$, and applying \cref{lem:corelemma} shows that
$\Mod(T)$ is $\Pib^0_n$-hard.
\end{proof}

\begin{thm}\label{thm:pi_n-ax}
Let $T$ be a theory and $n \in \omega$. Then $\Mod(T) \in \Pib^0_n$ if and
only if~$T$ is $\forall_n$-axiomatizable.
\end{thm}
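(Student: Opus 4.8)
The plan is to prove the two implications separately, observing that the substantive content of the nontrivial direction has already been isolated in \cref{lem:sigma_n}. First I would dispatch the easy direction: if~$T$ is $\forall_n$-axiomatizable, then $\Mod(T)\in\Pib^0_n$. Since the vocabulary is countable, there are only countably many $\forall_n$-sentences, so~$T$ has a \emph{countable} $\forall_n$-axiomatization~$\Sigma$, and $\Mod(T)=\Mod\left(\Wwedge_{\phi\in\Sigma}\phi\right)$. Each finitary $\forall_n$-sentence is literally a $\Pinf{n}$-formula (its finitary quantifier blocks and its degenerate conjunctions and disjunctions are special cases of the infinitary ones), and a countable conjunction of $\Pinf{n}$-formulas is again $\Pinf{n}$. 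By Vaught's proof of the Lopez--Escobar theorem, an isomorphism-invariant set defined by a $\Pinf{n}$-formula is~$\Pib^0_n$, so $\Mod(T)\in\Pib^0_n$. (Alternatively, one can argue purely topologically that $\Mod(\phi)$ is $\Pib^0_n$ for a single $\forall_n$-sentence~$\phi$ and then take the countable intersection.)

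For the converse I would argue by contraposition: assuming~$T$ is not $\forall_n$-axiomatizable, I would show $\Mod(T)\notin\Pib^0_n$. This is exactly where \cref{lem:sigma_n} does the work, yielding at once that $\Mod(T)$ is $\Sigmab^0_n$-hard. It then remains only to observe that a $\Sigmab^0_n$-hard set cannot be~$\Pib^0_n$: fix a $\Sigmab^0_n$-set~$U$ in~$2^\omega$ that is not~$\Pib^0_n$, which exists by the strictness of the Borel hierarchy. By $\Sigmab^0_n$-hardness there is a continuous~$f$ with $U=f^{-1}(\Mod(T))$; were $\Mod(T)$ in~$\Pib^0_n$, then~$U$, being the continuous preimage of a $\Pib^0_n$-set, would itself be~$\Pib^0_n$, a contradiction. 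Hence $\Mod(T)\notin\Pib^0_n$, completing the contrapositive.

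Given \cref{lem:sigma_n} (and through it \cref{lem:corelemma}), the real difficulty of this theorem lies behind us, so the points that remain require only care rather than a genuine obstacle: confirming that an \emph{infinite} $\forall_n$-axiomatization still produces a $\Pinf{n}$-formula (immediate once one notes the axiom set is countable) and correctly invoking the non-collapse of the Borel hierarchy in the relevant Polish space. The case $n\geq 1$ carries all the interest; the degenerate case $n=0$, where $\forall_0$-sentences in a relational vocabulary reduce to~$\top$ and~$\bot$, can be verified directly and is consistent with the statement.
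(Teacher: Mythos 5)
Your proof is correct and follows essentially the same route as the paper: the forward direction via \cref{lem:sigma_n} together with the observation that a $\Sigmab^0_n$-hard set cannot be $\Pib^0_n$, and the backward direction by noting that the countable conjunction of a $\forall_n$-axiomatization is a $\Pinf{n}$-sentence, hence defines a $\Pib^0_n$-set by Lopez--Escobar. The extra care you take in spelling out why hardness precludes membership in $\Pib^0_n$ is fine but adds nothing beyond the paper's argument.
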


\begin{proof}
If $\Mod(T) \in \Pib^0_n$, then it is not $\Sigmab^0_n$-hard. So, by
\cref{lem:sigma_n}, it must have a $\forall_n$-axiomatization. On the other
hand, if~$T$ is $\forall_n$-axiomatizable, then $\Mod(T) \in \Pib^0_n$, as
the infinitary conjunction over all sentences in the axiomatization
is~$\Pinf{n}$.
\end{proof}

For $\exists_n$-axiomatizable theories, the situation is not as simple as the
one for $\forall_n$-axiomatizable theories seen in \cref{thm:pi_n-ax}. If~$A$
is a $\exists_n$-axiomatization of~$T$, then $\Mod(T) = \Mod(\psi)$, where
$\psi = \Wwedge_{\phi\in A} \phi$. However, if~$A$ is not a finite
axiomatization, then~$\psi$ is not~$\Sinf{n}$, but
rather~$\Pinf{n+1}$. Combining this with the contrapositive of
\cref{lem:pi_n}, we obtain the following

\begin{prop}\label{thm:sigma_n-ax}
Let~$T$ be a theory and $n \in \omega$. If $\Mod(T)\in \Sigmab^0_n$, then~$T$
is $\exists_n$-axiomatizable. On the other hand, if~$T$ is
$\exists_n$-axiomatizable, then $\Mod(T) \in \Pib^0_{n+1}$. %\qed
\end{prop}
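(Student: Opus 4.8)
The plan is to prove the two implications separately, with the first following from the contrapositive of a hardness lemma already established, and the second from a direct complexity computation via the infinitary translation.

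For the first implication, suppose $\Mod(T) \in \Sigmab^0_n$. Since $\Sigmab^0_n$ and $\Pib^0_n$ are distinct pointclasses, a set in $\Sigmab^0_n$ cannot be $\Pib^0_n$-hard (any $\Pib^0_n$-hard set absorbs, via Wadge reduction, all of $\Pib^0_n$, which is not contained in $\Sigmab^0_n$). Hence $\Mod(T)$ is not $\Pib^0_n$-hard. I would then invoke the contrapositive of \cref{lem:pi_n}: that lemma asserts that a theory without an $\exists_n$-axiomatization has a $\Pib^0_n$-hard set of models, so a theory whose set of models fails to be $\Pib^0_n$-hard must be $\exists_n$-axiomatizable. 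This delivers the conclusion that~$T$ is $\exists_n$-axiomatizable.

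For the second implication, suppose~$T$ is $\exists_n$-axiomatizable, say by a (possibly infinite) set~$A$ of $\exists_n$-sentences. Then $\Mod(T) = \Mod(\psi)$ for the infinitary sentence $\psi = \Wwedge_{\phi \in A} \phi$. Each~$\phi \in A$ is $\exists_n$, hence~$\Sinf{n}$, and therefore~$\Pinf{n+1}$ as well (a $\Sinf{n}$ formula is also $\Pinf{n+1}$). A countable conjunction of $\Pinf{n+1}$-formulas is again $\Pinf{n+1}$, so~$\psi$ is $\Pinf{n+1}$. By the Vaught--Lopez-Escobar correspondence recalled in the preliminaries, an isomorphism-invariant set defined by a $\Pinf{n+1}$-formula is $\Pib^0_{n+1}$, whence $\Mod(T) \in \Pib^0_{n+1}$.

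I do not expect a serious obstacle here, since both halves reduce to results already in hand: the contrapositive of \cref{lem:pi_n} and the standard infinitary-formula complexity computation underlying \cref{thm:pi_n-ax}. The only point requiring a moment's care is the bookkeeping on quantifier levels in the second implication---namely that passing from an $\exists_n$ (i.e.\ $\Sinf{n}$) conjunct to a $\Pinf{n+1}$ classification, and then closing a countable conjunction under $\Pinf{n+1}$, genuinely lands at level $n+1$ rather than~$n$. This is exactly the phenomenon flagged in the paragraph preceding the statement: an infinite $\exists_n$-axiomatization yields a $\Pinf{n+1}$ rather than a $\Sinf{n}$ sentence, which is why the upper bound is $\Pib^0_{n+1}$ and not $\Sigmab^0_n$, and why the proposition is genuinely asymmetric compared with \cref{thm:pi_n-ax}.
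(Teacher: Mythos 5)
Your proposal is correct and matches the paper's own argument: the first implication is exactly the contrapositive of \cref{lem:pi_n} (using that a $\Sigmab^0_n$ set cannot be $\Pib^0_n$-hard), and the second is the same observation that the infinitary conjunction of an infinite $\exists_n$-axiomatization is $\Pinf{n+1}$, hence $\Mod(T)\in\Pib^0_{n+1}$ by Lopez-Escobar. No gaps; your care about why the conjunction lands at level $n+1$ rather than $n$ is precisely the point the paper flags before the statement.
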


%The vagueness of the bounds in \cref{thm:sigma_n-ax} raises the question
%which Wadge degrees are possible for $\exists_n$-axiomatizable theories, in
%particular for complete theories.
We now give examples of $\exists_n$-axiomatizable theories of different Wadge
degrees showing that the bounds in \cref{thm:sigma_n-ax} cannot be improved.

%FROM URI: In the following two and 4.15, we say ``of wadge degree
%$\Pib^0_{k+1}'' (or Sigma03), but elsewhere, I think we've been saying
%``complete'' for this Wadge degree. Of course, this is the same given Wadge's
%theorem, but perhaps we should say complete for uniformity?

\begin{example}\label{ex:pi_n+1}
For $k \leq 2$, there are $\exists_k$-axiomatizable $\aleph_0$-categorical
theories with a $\Pib^0_{k+1}$-complete set of models.
\end{example}
\begin{proof}

For $k=1$, let~$\tau$ be the signature consisting of one unary relation
symbol~$P$, and let~$T$ be the theory saying that~$P$ is infinite and
coinfinite.
%, i.e.,
%\begin{gather*}
%T=\{\exists x_0\dots\exists x_n [(\bigwedge_{i\leq n} P(x_i)) \wedge
%   (\bigwedge_{i<j\leq n} x_i\neq x_j)]: n\in\omega\}\\
%   {}\cup \{\exists x_0\dots \exists x_n [(\bigwedge_{i\leq n} \neg P(x_i))
%   \wedge (\bigwedge_{i<j\leq n} x_i\neq x_j)]:n\in\omega\}.
%\end{gather*}
%%FROM URI: This can also be cut for space.
$T$ is easily seen to be $\exists_1$-axiomatizable, $\aleph_0$-categorical,
and $\Mod(T)$ is $\Pib^0_2$-complete.
%To see that it is $\Pib^0_2$-complete note that the set $\Inf=\{ x\in
%2^\omega: \forall m(\exists n>m) x(n)=1\}$ is $\Pib^0_2$-complete. Let $f$ be
%the function $2^\omega\to \Mod(\tau)$ such that $\A_{x}=f(x)$ is the
%structure in which $\neg P^{\A_x}(2n)$ and $P^{\A_x}(2n+1)$ if and only if
%$x(n)=1$. This yields a Wadge reduction $\Inf\leq_W\Mod(T)$.

For $k=2$, let~$\tau$ be the signature consisting of a single binary relation
symbol~$R$. Let~$T$ say that~$R$ is irreflexive, symmetric and for every~$x$,
there is at most one~$y$ such that $R(x,y)$. Finally, let~$T$ say that there
are infinitely many~$x$ satisfying $\exists y R(x,y)$ and infinitely many~$x$
satisfying $\neg\exists y R(x,y)$. Then~$T$ is $\aleph_0$-categorical and
$\exists_2$-axiomatizable. To show that $\Mod(T)$ is $\Pib^0_3$-complete we
will give a Wadge reduction from the $\Pib^0_3$-complete subset~$P$
of~$2^{\omega\times\omega}$ consisting of all elements having infinitely many
empty columns, i.e., $P=\{ x\in 2^{\omega\times\omega}: \exists^\infty
m\forall n\; x(m,n)=0\}$~\cite[Exercise 23.2]{Ke95}. Given $x\in
2^{\omega\times \omega}$, we produce a structure~$\mathcal M$ with domain the
set $M=\{a_i \mid i\in \omega\}\cup \{b_{\langle j,k\rangle}\mid k \text{ is
least such that $x(j,k)=1$ } \}\cup \{c_i\mid i\in \omega\}\cup \{d_i\mid
i\in \omega\}$. We then set $R(c_i,d_i)$ and $R(a_i,b_{\langle i, k
\rangle})$ for any~$i$ and any~$i,k$ such that $b_{\langle i,k\rangle}$
exists. Note that the domain of~$M$ as defined is a computable set, so there
is an effective bijection with~$\omega$, and thus we may consider this to be a
map from~$2^{\omega\times \omega}$ to $\Mod(\tau)$. It is straightforward to
check that~$\mathcal M$ is a model of~$T$ if and only if $x\in P$.
\end{proof}

\begin{example}\label{ex:sigma_n}
There is a finitely $\exists_3$-axiomatizable $\aleph_0$-categorical theory
with a $\Sigmab^0_3$-complete set of models.
\end{example}

\begin{proof}
Consider the theory of the linear ordering $2\cdot \mathbb{Q} + 1 +
\mathbb{Q}$ together with its successor relation~$S$. This theory is
$\aleph_0$-categorical and is axiomatizable by the axioms for linear
orderings, the definition of the successor relation, the statement that there
is neither a least nor greatest element, and the following
$\exists_3$-formula:
%FROM URI: I added a wedge at the end of the first line here. Is it better at
%the beginning of the 2nd? In its own line? I don't know how to make things
%look right, but I think there should be a wedge between the clauses.
\begin{equation}\tag{$\star$}\label{exalign:s3}
    \begin{gathered}
      \exists x \left[ (\forall y<x) \exists z\,  [y<z<x] \land
        (\forall y>x)( \exists z\, [x<z<y] \land (\forall u>x)
        \neg S(y,u)\right.)\,\wedge
      \\
      \left.(\forall y<x) \left(\exists z
      (S(y,z) \lor S(z,y)) \land
      (\exists u\; S(y,u) \to \forall v\; \neg S(v,y))\right)\right]
    \end{gathered}
\end{equation}
One can easily verify that $L \cong 2 \cdot \mathbb{Q} + 1 + \mathbb{Q}$ for
any countable linear ordering~$L$ satisfying~\eqref{exalign:s3}, thus this is
a finite $\exists_3$-axiomatization for an $\aleph_0$-categorical theory. As
the axiomatization is finite, we get an axiomatization by a single
$\exists_3$-formula and thus its Wadge degree is at most~$\Sigmab^0_3$ by the
Lopez-Escobar theorem. To see hardness, note that it was shown
in~\cite[Theorem 3.3]{GRta} that the isomorphism class of $2 \cdot \mathbb{Q}
+ 1 + \mathbb{Q}$ is $\Sigmab^0_4$-complete in the space of linear orderings
(without successor relation). Now, towards a contradiction, assume it is not
$\Sigmab^0_3$-hard in $\Mod(\{\leq,S\})$. Then \cref{lem:sigma_n} gives a
$\Pinf{3}(\leq,S)$-sentence~$\phi$ such that $\Mod(\phi) = \Iso(2 \cdot
\mathbb{Q} + 1 + \mathbb{Q},\leq,S)$. But clearly~$\phi$ translates into a
$\Pinf{4}(\leq)$-formula, contradicting that $\Iso(2 \cdot \mathbb{Q} + 1 +
\mathbb{Q},\leq)$ is $\Sigmab^0_4$-complete in $\Mod(\{\leq\})$.
\end{proof}

We next show that~$3$ is minimal possible in \cref{ex:sigma_n}. This is
similar to a result of Arnold Miller~\cite{Mi83} that states that no
countable structure can have a $\Sigmab^0_2$-isomorphism class.
%(Recall that we are assuming all languages
%to be relational.)

\begin{prop}
Let~$\phi$ be a consistent $\Sinf{2}$-sentence. Then~$\phi$ has a finitely
generated model. In particular, if~$T$ is a complete relational theory and
$\Mod(T) \in \Sigmab^0_2$, then $\Mod(T) = \emptyset$.
\end{prop}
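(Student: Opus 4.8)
The plan is to prove the two assertions in sequence: first that any consistent $\Sinf{2}$-sentence has a finitely generated model, and then deduce the consequence for complete relational theories whose set of models is $\Sigmab^0_2$. I would begin by fixing a consistent $\Sinf{2}$-sentence $\phi$, which by definition has the form $\Vvee_{i} \exists \bar x\, \psi_i(\bar x)$ where each $\psi_i$ is a countable conjunction of $\forall$-formulas (equivalently, a $\Pinf{1}$-formula). Since $\phi$ is consistent, it has a countable model $\A$. The goal is to extract from $\A$ a \emph{finitely generated} submodel that still satisfies $\phi$.

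The key observation is that $\A \models \phi$ means there is some disjunct $\exists\bar x\,\psi_i(\bar x)$ witnessed in $\A$, so there is a finite tuple $\bar a$ from $\A$ with $\A \models \psi_i(\bar a)$. Now let $\B$ be the substructure of $\A$ generated by $\bar a$ (in a relational language this is just the set of elements named by $\bar a$, making $\B$ finite; more generally it is the finitely generated substructure on $\bar a$). The heart of the argument is that $\psi_i$ is a $\Pinf{1}$-formula, hence a countable conjunction of universal formulas, and universal formulas are preserved under passage to substructures. Therefore $\B \models \psi_i(\bar a)$, and so $\B \models \exists\bar x\,\psi_i(\bar x)$, which gives $\B \models \phi$. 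Thus $\B$ is a finitely generated model of $\phi$, as desired. The main point to get right is the preservation direction: I must confirm that the relevant infinitary universal formulas transfer downward from $\A$ to the generated substructure $\B$, which is the standard downward preservation of $\forall$-formulas (and their infinitary conjunctions) under substructures.

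For the second assertion, suppose $T$ is a complete relational theory with $\Mod(T) \in \Sigmab^0_2$. By the Lopez-Escobar theorem as used throughout the paper, $\Mod(T)$ being $\Sigmab^0_2$ means $T$ is equivalent to a $\Sinf{2}$-sentence $\phi$. If $T$ were consistent, then $\phi$ is consistent, so by the first part it has a finitely generated model $\B$. But the space $\Mod(\tau)$ consists of structures with universe $\omega$, i.e., \emph{countably infinite} structures; since $T$ is relational and $\B$ is finitely generated, $\B$ is finite, and no finite structure appears in $\Mod(\tau)$. The completeness of $T$ forces every model of $T$ to be elementarily equivalent to $\B$, but in the relational setting a finite model $\B$ has first-order theory that pins down the cardinality, so no infinite structure can satisfy $T$. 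Hence $\Mod(T) = \emptyset$.

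The step I expect to require the most care is reconciling the existence of the finite (finitely generated) model $\B$ with the requirement that elements of $\Mod(\tau)$ have universe $\omega$. The clean formulation is that completeness of $T$ together with a finite model implies $T$ proves a finitary sentence asserting ``there are exactly $k$ elements'' for some $k$, which has no model in $\Mod(\tau)$; therefore $\Mod(T)=\emptyset$. I would phrase the conclusion precisely so as to avoid the subtlety that $\Sinf{2}$-consistency is about having \emph{some} model rather than a model in the particular space $\Mod(\tau)$: the first part produces a finite model witnessing that $T$ (viewed as a first-order theory) is satisfied only by finite structures, which is exactly the obstruction to having any model with universe $\omega$.
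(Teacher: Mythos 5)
Your proof is correct and takes essentially the same approach as the paper: the first half is the identical downward-preservation argument (a witness $\ba$ for one disjunct generates a substructure, finite since the language is relational, which still satisfies the $\Pinf{1}$ matrix and hence $\phi$), and the second half combines the Lopez-Escobar theorem with completeness exactly as the paper does. The one point you flag as delicate---passing from a finite model of $\phi$ to a conclusion about $T$ itself, when $\Mod(\phi)=\Mod(T)$ is only an equality of subsets of $\Mod(\tau)$---is also the step the paper treats most tersely (it simply asserts that $T$ has a finite model $\A$ and concludes $\Mod(T)=\Iso(\A)$ by completeness), so your write-up is, if anything, more explicit there than the paper's own.
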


\begin{proof}
Suppose~$\phi$ is~$\Sinf{2}$, i.e., of the form $\Vvee_{i\in\omega} \exists
\bx \theta_i(\bx)$, where~$\theta_i$ are conjunctions of
$\forall_1$-sentences. Assume without loss of generality that $(\A,\ba)
\models \theta_i(\ba)$ for some~$i$, then every substructure of $(\A,\ba)$
satisfies $\theta_i(\ba)$ and thus the substructure of~$\A$ generated
by~$\ba$ satisfies~$\phi$. 	

Now assume that~$T$ is a complete relational theory and $\Mod(T) \in
\Sigmab^0_2$. Then by Lopez-Escobar, $\Mod(T) = \Mod(\phi)$ for a
$\Sinf{2}$-formula~$\phi$. It then follows from the above argument that~$T$
has a finite model~$\A$. Hence, $\Mod(T) = \Iso(\A)$ by the completeness
of~$T$.
%	, since if $\B \in \Mod(T) \setminus \Iso(\A)$, the non-membership
%in $\Iso(\A)$ is witnessed by a finitary sentence.
So,~$T$ does not have a countably infinite model, and $\Mod(T)$ is empty.
\end{proof}

Next we show that Examples~\ref{ex:pi_n+1} and~\ref{ex:sigma_n}  can be
generalized to higher quantifier levels.

\begin{lem}
Let $n \geq 2$. Let~$T$ be an $\exists_n$-axiomatizable theory such that
$\Mod(T)$ is $\Sigmab^0_n$-complete (or $\Pib^0_{n+1}$-complete,
respectively). Let~$T'$ be the $\Delta^0_2$-Marker extension of~$T$ (see
\cite[Lemma 2.8]{AM15}). Then~$T'$ is an $\exists_{n+1}$-axiomatizable theory
such that $\Mod(T)$ is $\Sigmab^0_{n+1}$-complete (or
$\Pib^0_{n+2}$-complete, respectively).
\end{lem}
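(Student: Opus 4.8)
The plan is to prove this as a preservation-of-completeness result under the Marker extension, reducing both the axiomatizability and the Wadge-completeness claims to the single-step jump behavior of the $\Delta^0_2$-Marker construction cited from \cite[Lemma 2.8]{AM15}. First I would recall precisely what the Marker extension does: it augments each model $\A \models T$ with new structure that codes the $\Delta^0_2$-diagram of $\A$ in a uniform, $\aleph_0$-categorical-preserving way, so that the map $\A \mapsto \A'$ is a continuous (indeed computable) injection $\Mod(T) \to \Mod(T')$ and, more importantly, that the quantifier complexity of every statement about $\A$ rises by exactly one level when interpreted in $\A'$. The key structural fact I would extract from the cited lemma is that $\Mod(T') \equiv_W \Mod(T)$ as a Wadge degree is \emph{not} what happens — rather, the construction raises complexity by one, so I should verify that the cited lemma yields $\Mod(T') \leq_W \Mod(T)'$ (a genuine jump) together with the matching hardness in the other direction.

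For the upper bound on complexity, the plan is to show that $T'$ is $\exists_{n+1}$-axiomatizable. I would take an $\exists_n$-axiomatization $A$ of $T$ and argue that each axiom $\phi \in A$, when relativized to the $T$-part of the Marker-extended language, becomes $\exists_{n+1}$ once we account for the new existential quantifiers needed to locate the coding sorts; the additional axioms governing the Marker apparatus itself are low-complexity (the cited lemma should guarantee they are at most $\exists_{n+1}$, and for $n \geq 2$ this is where the hypothesis is used). Combining this with \cref{thm:pi_n-ax} and \cref{thm:sigma_n-ax} gives the upper bounds $\Sigmab^0_{n+1}$ (in the finite/complete case) and $\Pib^0_{n+2}$ respectively. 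Note the statement of the lemma as written says ``$\Mod(T)$ is $\Sigmab^0_{n+1}$-complete,'' which I read as a typo for $\Mod(T')$; I would prove the claim for $\Mod(T')$.

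For the hardness direction, the plan is to transport the completeness of $\Mod(T)$ through the jump. Since $\Mod(T)$ is $\Sigmab^0_n$-complete (resp.\ $\Pib^0_{n+1}$-complete), every set in that pointclass Wadge-reduces to it. I would compose such a reduction with the Marker map $\A \mapsto \A'$, using the fact that membership in the image is detected one quantifier higher, to manufacture a continuous reduction of an arbitrary $\Sigmab^0_{n+1}$-set (resp.\ $\Pib^0_{n+2}$-set) into $\Mod(T')$. Concretely, I would feed the $p$-indexed family of $T$-or-not-$T$ decisions from a canonical complete set into the Marker construction fiberwise, so that the ``there exists a level at which$\ldots$'' or ``for all levels$\ldots$'' structure of the target pointclass is matched by the extra quantifier block introduced by the coding.

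The hardest part will be verifying that the Marker extension raises the Wadge degree by \emph{exactly} one level and no more, in both the $\Sigma$ and the $\Pi$ column simultaneously, and that this holds for all $n \geq 2$ rather than only in the base cases of \cref{ex:pi_n+1} and \cref{ex:sigma_n}. This requires a careful bookkeeping of quantifier alternations in the interpretation of $T$ inside $T'$ and a check that the new sorts do not accidentally collapse complexity (which would give too-easy an upper bound) nor inflate it past one level (which would break the induction). I expect the cleanest route is to state and prove an explicit one-step jump lemma for the $\Delta^0_2$-Marker functor — that $\Mod(\Th(\A')) \equiv_W \Mod(\Th(\A))'$ for the relevant $\A$ — and then invoke it mechanically, since iterating this lemma is presumably how the intended generalization to all $n$ is meant to proceed.
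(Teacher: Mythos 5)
Your overall plan has the right shape (bound the complexity of $\Mod(T')$ from above, transport hardness through the Marker construction, check axiomatizability), but both of the substantive steps rest on mechanisms that do not actually deliver the claim. For the upper bound, you propose to show $T'$ is $\exists_{n+1}$-axiomatizable and then invoke \cref{thm:sigma_n-ax}; that only yields $\Mod(T')\in\Pib^0_{n+2}$, which suffices for the $\Pib^0_{n+2}$ case but not for the $\Sigmab^0_{n+1}$ case, and the lemma does not assume finite axiomatizability, so you cannot upgrade to a single $\exists_{n+1}$-sentence. The paper instead bounds $\Mod(T')$ directly by decoding: given a structure $B$, it is $\Deltab^0_3$ (hence within $\Deltab^0_{n+1}$ for $n\geq 2$) to check that $B$ is a $\Delta^0_2$-Marker extension of some $\hat B$, with $\hat B$ uniformly $\Delta^0_2(B)$, and then ``$\hat B\models T$'' is $\Sigma^0_n(B')$, i.e.\ $\Sigmab^0_{n+1}$ in $B$. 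No axiomatization of $T'$ is needed for this bound.

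The more serious gap is in the hardness direction. Composing a Wadge reduction $P\leq_W\Mod(T)$ with a continuous map $\A\mapsto\A'$ on atomic diagrams can never raise the pointclass: it only shows $\Mod(T')$ is $\Sigmab^0_n$-hard, since the sets you can feed in are still only the $\Sigmab^0_n$ ones. The fact you need, and which your write-up does not isolate, is that the $\Delta^0_2$-Marker extension of a structure is uniformly computable from an oracle one jump \emph{below} the structure itself: if $g(k\concat p')$ is uniformly computable from $D\oplus p'$, then a copy of its Marker extension is uniformly computable from $D\oplus p$. The paper exploits exactly this: taking the reduction $g$ of the $\Sigmab^0_n$-complete set $P_n=\{k\concat p \mid k\in p^{(n)}\}$ to $\Mod(T)$, the map sending $k\concat p$ to the Marker extension of $g(k\concat p')$ is continuous (no jump of $p$ is ever computed) and reduces $P_{n+1}$ to $\Mod(T')$, because $k\in p^{(n+1)}$ iff $k\concat p'\in P_n$. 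Your proposed ``one-step jump lemma'' $\Mod(\Th(\A'))\equiv_W\Mod(\Th(\A))'$ gestures at this, but without the jump-absorption property of the Marker construction there is no mechanism behind it, and your framing of $\A\mapsto\A'$ as a computable injection on $\Mod(T)$ points in the wrong direction. (Your reading of ``$\Mod(T)$'' in the conclusion as a typo for ``$\Mod(T')$'' is correct.)
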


\begin{proof}
We focus on the case where $\Mod(T)$ is $\Sigmab^0_n$-complete, with the
$\Pib^0_{n+1}$-complete case being similar. Note that $\Mod(T')$
is~$\Sigmab^0_{n+1}$, since from a structure~$B$, it is  $\Delta^0_3(B)$ to
check that it is a $\Delta^0_2$-Marker extension of a structure~$\hat{B}$,
with~$\hat{B}$ being uniformly $\Delta^0_2(B)$. Finally,~$B$ is a model
of~$T'$ if and only if~$\hat{B}$ is a model of~$T$, which is
$\Sigma^0_{n}(B')$. Putting all together, we get that $\Mod(T')$
is~$\Sigmab^0_{n+1}$.	

Since $\Mod(T)$ is $\Sigmab^0_n$-complete, there is a Wadge reduction of $P_n
= \{k\concat p \mid k\in p^{(n)},p\in 2^\omega\}$ to $\Mod(T)$. We will
convert this into a Wadge reduction of $P_{n+1} = \{k\concat p \mid k \in
p^{(n+1)}, p\in 2^\omega\}$ to $\Mod(T')$. Since~$P_{n+1}$ is a
$\Sigmab^0_{n+1}$-complete subset of~$\omega^\omega$, this shows $\Mod(T')$
is $\Sigmab^0_{n+1}$-complete.\footnote{To see that~$P_{n+1}$ is
$\Sigmab^0_{n+1}$-complete, first note that it is~$\Sigmab^0_{n+1}$.
%FROM STEFFEN: I made the last Sigma above boldface
If there was~$D$ such that~$P_{n+1}$ is $\Pi^0_{n+1}(D)$, then~$P_{n+1}$
would be $\Delta^0_{n+1}(D)$. Hence, we would get that for any~$C$
computing~$D$ that $k \in C^{(n+1)}$ if and only if $k\concat \chi_C\in
P_{n+1}$ and this would be $\Delta^0_{n+1}(C)$, hence computable from
$C^{(n)}$. But this would contradict that the Turing jump is proper. So, by
Wadge's lemma,~$P_{n+1}$ is $\Sigmab^0_{n+1}$-complete.}

Let~$g$ be the continuous map reducing~$P_n$ to $\Mod(T)$, and let~$D$ be an
oracle which computes~$g$. Given $k \concat p$, $g(k\concat p')$ is
uniformly computable from $D \oplus p'$. Then $D \oplus p$ can uniformly
compute a copy of the $\Delta^0_2$-Marker extension of $g(k\concat p')$.
This yields the $\Sigmab^0_{n+1}$-hardness of $\Mod(T')$. 	

It is straightforward to check that if~$T$ is $\exists_n$-axiomatizable for
$n \geq 2$, then~$T'$ is $\exists_{n+1}$-axiomatizable.
\end{proof}

Since Marker extensions preserve $\aleph_0$-categoricity and finite
axiomatizability, we can generalize Examples~\ref{ex:pi_n+1}
and~\ref{ex:sigma_n} to higher quantifier levels.

\begin{example}\label{ex:HigherQs}
For every $n \geq 1$, there is an~$\exists_n$-axiomatizable
$\aleph_0$-categorical theory~$T$ such that~$\Mod(T)$ is
$\Pib^0_{n+1}$-complete.

For every $n \geq 3$, there is a finitely~$\exists_n$-axiomatizable
$\aleph_0$-categorical theory~$T$ such that $\Mod(T)$
is~$\Sigmab^0_n$-complete.
\end{example}

\begin{example}\label{ex:diff}
For any $n \geq 3$, there is an $\exists_n$-axiomatizable
$\aleph_0$-categorical theory~$T$ such that $\Mod(T)$ is a properly
$\Deltab^0_{n+1}$-set.
\end{example}

\begin{proof}
Fix~$T_0$ to be an $\exists_{n-1}$-axiomatizable $\aleph_0$-categorical
theory such that $\Mod(T_0)$ is $\Pib^0_{n}$-complete. Fix~$T_1$ to be an
$\exists_n$-axiomatizable $\aleph_0$-categorical theory such that $\Mod(T_1)$
is $\Sigmab^0_{n}$-complete. Let~$T$ have a unary predicate $U$ and say that
the set of elements realizing~$U$ is a model of~$T_0$ and the set of elements
realizing~$\neg U$ is a model of~$T_1$. Then~$T$ is $\exists_n$-axiomatizable
and $\Mod(T)$ is $D_2(\Sigmab^0_{n})$-complete.
%Suppose that~$T$ is a theory in a two-sorted relational language via a unary
%relation symbol~$Q$ such that the part of the structure on which~$Q$ holds is
%a model of the theory~$T_1$ given in \cref{ex:sigma_n} and the part of the
%structure on which~$Q$ does not hold is a model of the theory~$T_2$, the
%first Marker extension of the theory proposed in \cref{ex:pi_n+1}. Then, as
%$\Mod(T_1)$ is $\Sigmab^0_3$-complete and $\Mod(T_2)$ is $\Pib^0_3$-complete,
%clearly $\Mod(T)\not\in \Sigmab^0_3\cup \Pib^0_3$ as it is both
%$\Sigmab^0_3$-hard and $\Pib^0_3$-hard. Using the Lopez-Escobar theorem to
%get a $\Sinf{3}$-axiomatization~$\phi_1$ of $\Mod(T_1)$ and a $\Pinf{3}$-
%axiomatization~$\phi_2$ of $\Mod(T_2)$, we get that $\Mod(T)$ is axiomatized
%by
%\[
%\phi_1\land \phi_2
%\]
%and thus $\Mod(T)\in \Deltab^0_4$. To see that~$T$ has an
%$\exists_3$-axiomatization, simply note that it is axiomatized by any
%axiomatizations of~$T_1$ and~$T_2$ relativized to the respective sorts.
\end{proof}

%One just has to check that Marker extensions preserve
%finite axiomatizability. This can be seen either directly from his original
%proof or by giving a more structural proof, where the basic examples are jump
%inverted using well-orderings of order type~$\omega^n$ for even cases and the
%associated successor orderings for odd cases. As all of these orderings are
%finitely axiomatizable, finite axiomatizability is preserved (note that
%$\aleph_0$-categoricity is not preserved).

%FROM URI: Removed because I can't parse what it means.
%Combining \cref{thm:pi_n-ax} and Examples~\ref{ex:diff}, \ref{ex:pi_n+1}
%and~\ref{ex:sigma_n}, we see that the quantifier complexity of the
%axiomatization of a theory cannot be too far off the Wadge degree of its set
%of models. However, the exact Wadge degree cannot be established by finitary
%means.

We observe that a special case of \cref{thm:pi_n-ax} implies a case of a
theorem of
%FROM URI: This version is just Keisler. Wadge applied it to Mod(T).
%Wadge~\cite{Wa83} as a corollary of his work on Wadge
%degrees and work of
Keisler \cite[Corollary~3.4]{Ke65}, recently reproved by Harrison-Trainor and
Kretschmer \cite{HK23}.

\begin{thm}
If a finitary first-order formula~$\phi$ is equivalent to $\psi \in
\Pinf{n}$, then there is a $\forall_n$-formula~$\theta$ such that $\phi
\equiv \theta$.
\end{thm}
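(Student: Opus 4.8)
The plan is to deduce this from the characterization in \cref{thm:pi_n-ax} by passing from a single formula to the theory it generates. The key observation is that $\phi$ being equivalent to a $\Pinf{n}$-sentence~$\psi$ is exactly the statement that $\Mod(\phi) = \Mod(\psi)$ is a $\Pib^0_n$ subset of $\Mod(\tau)$, since by Vaught's form of the Lopez-Escobar theorem an isomorphism-invariant set is $\Pib^0_n$ if and only if it is $\Pinf{n}$-definable. So the hypothesis immediately gives us that $\Mod(\phi) \in \Pib^0_n$.

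Now I would apply \cref{thm:pi_n-ax} to the theory $T$ axiomatized by the single formula~$\phi$, i.e., $T = \overline{\{\phi\}}$. Since $\Mod(T) = \Mod(\phi) \in \Pib^0_n$, the theorem yields that~$T$ is $\forall_n$-axiomatizable: there is a (possibly infinite) set~$A$ of $\forall_n$-sentences with $\overline{A} = \overline{\{\phi\}}$. The remaining task is to collapse this infinite $\forall_n$-axiomatization down to a \emph{single} finitary $\forall_n$-formula~$\theta$ that is equivalent to~$\phi$. This is where compactness enters, as advertised in the introduction.

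The compactness argument runs as follows. Since $A \vdash \phi$, by compactness finitely many sentences $\psi_1,\dots,\psi_k \in A$ already prove~$\phi$; let $\theta = \psi_1 \wedge \cdots \wedge \psi_k$, which is again a $\forall_n$-sentence (a finite conjunction of $\forall_n$-sentences is equivalent to a $\forall_n$-sentence). Then $\theta \vdash \phi$. For the reverse direction, each $\psi_j \in A \subseteq \overline{\{\phi\}}$, so $\phi \vdash \psi_j$ for every~$j$, hence $\phi \vdash \theta$. Therefore $\phi \equiv \theta$ with~$\theta$ a finitary $\forall_n$-formula, as required.

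The step I expect to be the main obstacle is the translation between the semantic and syntactic sides, namely justifying that the hypothesis ``$\phi \equiv \psi$ for some $\psi \in \Pinf{n}$'' really does yield $\Mod(\phi) \in \Pib^0_n$ in the precise sense used by \cref{thm:pi_n-ax}. One must be careful that the equivalence of~$\phi$ and~$\psi$ is over all countable structures (so that the two sets of atomic diagrams genuinely coincide), and invoke the Lopez-Escobar theorem in the direction producing a $\Pib^0_n$ set from a $\Pinf{n}$-formula. Once this identification is in place, the remainder is the routine compactness collapse described above, and indeed this is precisely the ``easy application of compactness'' promised in \cref{sec:intro}.
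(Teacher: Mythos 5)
Your proposal is correct and follows essentially the same route as the paper: Lopez--Escobar to get $\Mod(\phi)\in\Pib^0_n$, then \cref{thm:pi_n-ax} to obtain a $\forall_n$-axiomatization, then compactness to collapse it to a single $\forall_n$-sentence. The only detail the paper adds is that since the statement concerns a \emph{formula} (possibly with free variables), one first replaces free variables by new constants to reduce to the case of a sentence.
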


\begin{proof}
By adding constants, we may assume that~$\phi$ is a sentence. Since~$\phi$ is
equivalent to~$\psi$, we get that $\Mod(\{\phi\}) \in \Pib^0_n$. Thus,
\cref{thm:pi_n-ax} shows that~$\phi$ has a $\forall_n$-axiomatization.
Compactness implies that~$\phi$ is equivalent to a single
$\forall_n$-sentence.
%Assume~$\phi$ is not equivalent to any $\Pi_n$-formula. Then by
%\cref{lem:hongyu}, we can get complete theories $T^+ \ni \phi$ and $T^-
%\not\ni \phi$ such that $\Sigma_n \cap T^- \subseteq \Sigma_n \cap T^+$.
%Thus, by \cref{lem:corelemma}, $\Mod(\phi)$ is $\Sigmab^0_n$-hard.
%Hence,~$\phi$ cannot be equivalent to a $\Pinf{n}$-formula.
\end{proof}

Combining our results from this section, we obtain the following
characterization.

\begin{thm}
Let~$T$ be a theory and $n \in \omega$. Then the following are equivalent.
\begin{enumerate}
\item
$T$ has a $\forall_n$-axiomatization but no $\forall_{n-1}$-axiomatization.
\item
The Wadge degree of $\Mod(T)$ is in $[\Sigmab^0_{n-1},\Pib^0_n]$.
\end{enumerate}
\end{thm}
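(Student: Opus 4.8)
The plan is to collapse both conditions to the single statement that $\Mod(T)\in\Pib^0_n$ but $\Mod(T)\notin\Pib^0_{n-1}$, and then observe that each of (1) and (2) is literally a reformulation of this. The two ingredients are \cref{thm:pi_n-ax} and Wadge's lemma for Borel sets; the latter is available because $\Mod(T)$ is always at most $\Pib^0_\omega$, hence Borel.

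First I would translate condition (1) into pointclass language. By \cref{thm:pi_n-ax}, $T$ has a $\forall_n$-axiomatization if and only if $\Mod(T)\in\Pib^0_n$, and likewise $T$ has a $\forall_{n-1}$-axiomatization if and only if $\Mod(T)\in\Pib^0_{n-1}$. Hence condition (1) is equivalent to the assertion that $\Mod(T)$ lies in $\Pib^0_n$ but not in $\Pib^0_{n-1}$.

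Next I would show that the Wadge degree of $\Mod(T)$ lying in $[\Sigmab^0_{n-1},\Pib^0_n]$ is equivalent to the same assertion. Being $\leq_W$ the $\Pib^0_n$-complete set is by definition membership in $\Pib^0_n$, which handles the upper endpoint. For the lower endpoint I would apply Wadge's lemma to a Borel set $A$ and the $\Sigmab^0_{n-1}$-complete set $S$: either $S\leq_W A$, so that $A$ is $\Sigmab^0_{n-1}$-hard and hence $A\geq_W S$; or $A\leq_W\neg S$, and since $\neg S$ is $\Pib^0_{n-1}$-complete and $\Pib^0_{n-1}$ is closed under continuous preimages, this gives $A\in\Pib^0_{n-1}$. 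These two alternatives are mutually exclusive, since $A\geq_W S$ together with $A\in\Pib^0_{n-1}$ would force the properly $\Sigmab^0_{n-1}$ set $S$ into $\Pib^0_{n-1}$. Thus $A\geq_W S$ if and only if $A\notin\Pib^0_{n-1}$. Combining the two endpoints, the Wadge degree of $\Mod(T)$ lies in $[\Sigmab^0_{n-1},\Pib^0_n]$ exactly when $\Mod(T)\in\Pib^0_n$ and $\Mod(T)\notin\Pib^0_{n-1}$, which is the reformulation of (1) from the previous paragraph; this closes the equivalence.

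The one point requiring care is the correct reading of the interval notation in the presence of the non-self-dual pair at level $n-1$: the $\Pib^0_{n-1}$-complete degree is Wadge-incomparable to the $\Sigmab^0_{n-1}$-complete degree and so must be excluded from $[\Sigmab^0_{n-1},\Pib^0_n]$, which is precisely what the clause $\Mod(T)\notin\Pib^0_{n-1}$ enforces via the dichotomy above. I would also fix the boundary conventions at the bottom of the hierarchy (reading $\Pib^0_0$ as the clopen sets and $\forall_0$ as quantifier-free) so that the $n=1$ case of the statement is unambiguous.
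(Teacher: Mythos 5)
Your proof is correct, but it routes the key lower bound differently from the paper. The paper states this theorem as a direct combination of the section's results: the upper endpoint comes from \cref{thm:pi_n-ax} exactly as you use it, but the lower endpoint of the interval --- that no $\forall_{n-1}$-axiomatization forces the Wadge degree of $\Mod(T)$ to lie above $\Sigmab^0_{n-1}$ --- is obtained directly from \cref{lem:sigma_n}, which produces an explicit Wadge reduction of an arbitrary $\Sigmab^0_{n-1}$ set to $\Mod(T)$ via \cref{lem:corelemma} and the Knight--Solovay machinery. You instead first convert ``no $\forall_{n-1}$-axiomatization'' into $\Mod(T)\notin\Pib^0_{n-1}$ using \cref{thm:pi_n-ax} at level $n-1$, and then invoke Wadge's lemma to upgrade non-membership in $\Pib^0_{n-1}$ to $\Sigmab^0_{n-1}$-hardness. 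Both arguments are valid (and your handling of the non-self-dual pair at level $n-1$ and of the bottom of the hierarchy is careful and correct), but note that your detour through Wadge's lemma reintroduces Borel Wadge determinacy, whose avoidance the paper explicitly advertises as a metamathematical advantage of its priority-construction approach; the paper's route gets the hardness constructively and in a weak base theory. Also, since the hard direction of \cref{thm:pi_n-ax} is itself proved from \cref{lem:sigma_n}, your appeal to \cref{thm:pi_n-ax} at level $n-1$ followed by Wadge's lemma is, from the paper's perspective, a longer path to a fact that \cref{lem:sigma_n} gives in one step.
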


Note that the intervals $[\Sigmab^0_{n-1},\Pib^0_n]$ contain $\aleph_1$-many
different $\Deltab^0_{n}$-Wadge degrees.

\begin{query}
Which~$\Deltab^0_n$-Wadge degrees are the degree of $\Mod(T)$ for some
(complete) finitary first-order theory?
\end{query}

\section{Proofs of the two technical results}\label{sec:proofs}

In the present section, we will prove \cref{thm:TTn} and
\cref{lem:corelemma}. We will first prove \cref{thm:TTn} and then introduce
some minor modifications to the proof to prove \cref{lem:corelemma}.

The proof of \cref{thm:TTn} relies on a theorem of Knight that initially
appeared in~\cite{Kn87} where it is proved using a worker argument, a
technique for (possibly infinite) iterated priority constructions developed
by Harrington. In~\cite{Kn99}, a new proof of this result can be found based
on a version of Ash and Knight's $\alpha$-systems~\cite{AK00}. Let us
introduce all the definitions necessary to state this theorem.

A \emph{Scott set}~$\S$ is a subset of~$2^\omega$ that is closed under Turing
reducibility, join, and satisfies Weak K\"onig's Lemma, i.e., if $T \in \S$
codes an infinite binary tree, then there is a path~$f$ through~$T$ such that
$f \in \S$. An \emph{enumeration} of a countable Scott set~$\S$ is a set $r
\in 2^\omega$ satisfying $\S = \{ r^{[n]} \mid n \in \omega\}$, i.e.,~$\S$
equals the set of columns of~$r$. If $A=r^{[i]}$, then we say that~$i$ is an
$r$-index for~$A$.

%FROM URI: This simply has nothing to do with us.
%At last, given a structure~$\A$, we say
%that~$\A$ \emph{represents~$\S$} if for all tuples~$\ol{c}$ and all complete
%$\exists_n\cup \forall_n$-types $\Gamma(\ol{c},x)$ such that
%$\Gamma(\ol c,x)$ is consistent with
%the type of $\ol c$,
%\[
%\Gamma(\ol c,x) \text{ is realized in $\A$ }\iff  \Gamma(\ol c, x)\in \S.
%\]
We are now ready to state Knight's theorem.

\begin{thm}[{\cite[Theorem~2.5]{Kn99}}]\label{thm:2.5}
Let~$T$ be a complete theory. Suppose $r \le_T x$ is an enumeration of a
Scott set~$\S$, with functions~$t_n$ which are $\Delta^0_n(x)$ uniformly
in~$n$, such that for each~$n$, $\lim_s t_n(s)$ is an $r$-index for $T \cap
\exists_n$, and for all~$s$, $t_n(s)$ is an $r$-index for a subset of $T \cap
\exists_n$. Then~$T$ has a model~$\B$
%	, representing~$\S$,
with $\B \le_T x$.
\end{thm}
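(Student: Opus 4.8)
The plan is to build the model $\B$ of $T$ directly from the data $(r, t_n)$ by running an iterated priority construction over the enumeration of the Scott set~$\S$. The final statement to prove is Knight's theorem (\cref{thm:2.5}): given an enumeration $r \le_T x$ of a Scott set~$\S$ together with functions $t_n$ that are $\Delta^0_n(x)$ uniformly in~$n$, each $\lim_s t_n(s)$ being an $r$-index for $T \cap \exists_n$ and each $t_n(s)$ an $r$-index for a \emph{subset} of $T\cap\exists_n$, we want a model $\B \le_T x$ of~$T$. The key structural feature to exploit is that Weak K\"onig's Lemma inside~$\S$ lets us find, uniformly, consistent completions of finite approximations to~$T$, so that the construction never gets stuck for lack of a witness.

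First I would set up the construction as building an increasing sequence of finite diagrams over domain~$\omega$, deciding atomic facts one at a time. At each stage I maintain a finite list of requirements: a Henkin-style requirement that each existential sentence true in~$T$ eventually gets a witness among the domain elements, and a completeness/consistency requirement that the finite atomic diagram built so far be extendible to a model of~$T\cap\exists_n$ for the relevant level~$n$. The functions $t_n(s)$ supply, at each stage, an $r$-index for a finite approximation to $T\cap\exists_n$; because these are only \emph{subsets} at finite stages but converge (in the limit) to the correct index, a requirement tied to level~$n$ may have to be injured when $t_n(s)$ changes its mind. This is exactly the situation that a priority/worker argument is designed for: organize requirements by level~$n$ into priorities, let higher levels injure lower ones finitely often, and verify that the limit $\lim_s t_n(s)$ being correct guarantees each requirement is eventually satisfied and never again injured.

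The heart of the argument is the injury bookkeeping across the $\omega$ levels, together with the oracle accounting that keeps the final model computable from~$x$. Since $t_n$ is $\Delta^0_n(x)$ uniformly, the decision made at level~$n$ costs an~$n$-th jump of~$x$, so no single stage of the construction is computable from~$x$ alone; rather, one arranges the worker hierarchy so that worker~$n$ (running with oracle $x^{(n)}$) passes stabilized guesses down to worker~$0$, and the honest $\Delta^0_n(x)$-uniformity lets the whole tower be assembled into a single $\B\le_T x$ by taking limits level by level. Whenever a Henkin witness or a consistency check needs a genuine extension of the current finite diagram to something consistent with $T\cap\exists_n$, I would invoke Weak K\"onig's Lemma within~$\S$: the set of finite extensions consistent with the $r^{[t_n(s)]}$-coded theory forms a tree in~$\S$, and WKL provides a path in~$\S$, i.e.\ a concrete extension available to the construction. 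The main obstacle, and the place where the worker machinery earns its keep, will be verifying the limit behavior: showing that for each~$n$ the level-$n$ requirements are injured only finitely often (using that $t_n$ converges) and that the amalgamated limits across all levels really produce a complete consistent atomic diagram whose theory is~$T$, rather than merely some theory agreeing with~$T$ on $\exists_n$ for each~$n$ separately. Rather than reprove this, I would cite the $\alpha$-systems framework of Ash and Knight~\cite{AK00} as in~\cite{Kn99}, which packages precisely this iterated-limit analysis and discharges the injury and convergence verification.
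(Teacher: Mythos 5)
This statement is Knight's theorem, which the paper imports from \cite{Kn99} without reproving it, and your sketch follows the same route as Knight's proof that the paper relies on: a Henkin-style worker/iterated-priority construction over the enumerated Scott set, with the injury bookkeeping and iterated-limit verification delegated to the Ash--Knight $\alpha$-systems metatheorem of \cite{AK00}. The outline is sound and correctly isolates the essential ``true approximation'' feature --- that each $t_n(s)$ indexes only a \emph{subset} of $T\cap\exists_n$, so the guesses err by omission rather than commission --- so this is essentially the same approach as the paper's source.
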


We will need the following uniform version of \cref{thm:2.5}. To prove this,
one could simply note that the techniques involved in the proof of
\cref{thm:2.5}---$\alpha$-systems and standard finite-injury
constructions---are uniform. However, as the proofs
in~\cite[Theorem~2.5]{Kn99} are combinatorially quite difficult, we provide
some guidance for the interested reader in the form of a proof sketch.

%\texttt{It is not clear to me that Marker's theorem is uniform. Solovay's
%theorem is uniform. This seems to me to mean that we need to give a slightly
%different version of the claimed uniform theorem. 2 attempts below. Perhaps
%we can discuss? A minor additional point, in the current version, you either
%need to specify the reduction from x to r or you need to put r into the
%oracle for $\Gamma$.}
%
%\texttt{URI's try:}

\begin{thm}\label{thm:uniform2.5}
Let~$r$ be an enumeration of a Scott set~$\mathcal S$. Then there exists a
Turing operator $\Gamma:2^\omega\times \omega\to 2^\omega$ which satisfies
the following:

Let~$x$ be any set and~$\Phi_e$ be such that $\lim_s \Phi_e^{x^{(n-1)}}(s)$
is an $r$-index for $T\cap\exists_n$, and for every~$s$,
$\Phi_e^{x^{(n-1)}}(s)$ is an $r$-index for a subset of $T\cap \exists_n$.
Then $\Gamma^{x\oplus r}(e)\in \Mod(T)$.
\end{thm}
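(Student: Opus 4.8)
The plan is to derive Theorem~\ref{thm:uniform2.5} as a uniform reformulation of Theorem~\ref{thm:2.5}, exploiting the fact that the construction underlying the latter is effective and uniform in all its data. The key observation is that Theorem~\ref{thm:2.5} does not merely assert existence of a model $\B \le_T x$; its proof produces $\B$ by an explicit (if combinatorially involved) procedure---an $\alpha$-system or finite-injury priority construction---whose oracle inputs are exactly the enumeration $r$, the set $x$, and the sequence of approximating functions $t_n$. Since all of these are being fed in as oracles and the construction itself involves no noneffective choices, the map from (indices for) these inputs to (an index for) the resulting structure $\B$ is given by a single Turing operator.

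Concretely, I would first recast the hypotheses of Theorem~\ref{thm:2.5} in terms of a program index~$e$. The functions $t_n$ there are required to be $\Delta^0_n(x)$ uniformly in~$n$; equivalently, there is a single~$\Phi_e$ such that $t_n(s) = \lim_t \Phi_e^{x^{(n-1)}}(s,t)$, or, absorbing the approximation as in the statement, such that $\lim_s \Phi_e^{x^{(n-1)}}(s)$ is an $r$-index for $T \cap \exists_n$ while each finite stage $\Phi_e^{x^{(n-1)}}(s)$ indexes a subset of $T \cap \exists_n$. This is precisely the hypothesis placed on~$\Phi_e$ in Theorem~\ref{thm:uniform2.5}, so the two sets of hypotheses match verbatim once one identifies $t_n(s)$ with $\Phi_e^{x^{(n-1)}}(s)$. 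The substantive content is then that the construction of~$\B$ in the proof of Theorem~\ref{thm:2.5} queries its oracles only through: (i)~the enumeration $r$; (ii)~the jumps $x^{(n-1)}$ needed to run the approximations; and (iii)~the index~$e$. Bundling $x$ and $r$ into a single oracle $x \oplus r$ (from which every $x^{(n-1)}$ is uniformly computable) and passing~$e$ as an ordinary argument, I would package the construction as the desired operator $\Gamma^{x \oplus r}(e)$, and then observe that whenever $\Phi_e$ satisfies the stated limit conditions, the hypotheses of Theorem~\ref{thm:2.5} hold and hence $\Gamma^{x \oplus r}(e) = D(\B)$ for a model $\B \models T$, i.e.\ $\Gamma^{x \oplus r}(e) \in \Mod(T)$.

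Following the excerpt's own suggestion, rather than reconstructing the full priority argument I would present this as a proof sketch that isolates the uniformity at each layer: the $\alpha$-system has a fixed, index-independent combinatorial skeleton, and the only theory-dependent or input-dependent data enter as oracle values; the standard finite-injury bookkeeping (priorities, injury sets, restraint) is defined by a fixed scheme independent of~$e$, $x$, and~$r$; and the passage from the approximating functions at each level to the next is given by a uniform limit procedure. I would emphasize that $\Gamma$ is defined as a \emph{single} operator (not depending on $x$, $T$, or $e$), so that its output at $(x\oplus r, e)$ is determined uniformly, and that correctness is conditional: $\Gamma^{x\oplus r}(e)$ is guaranteed to lie in $\Mod(T)$ only when $\Phi_e$ meets the limit hypotheses, which is exactly what the theorem claims.

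The main obstacle I anticipate is \emph{verifying} that the original construction of Theorem~\ref{thm:2.5} is genuinely uniform rather than merely effective relative to fixed data---that is, confirming that no step secretly depends on a global parameter (such as an a priori bound derived from the particular theory $T$ or a nonuniform choice of path through a tree from Weak K\"onig's Lemma) that would break when $e$, $x$, and $r$ are allowed to vary as inputs. Since the path selection for the Scott set is already encoded in the enumeration $r$ (which is supplied as an oracle), and since the $\alpha$-system framework of~\cite{AK00} is designed to run uniformly from its defining data, I expect this verification to go through, but it is the point where genuine care---as opposed to routine bookkeeping---is required, and it is precisely why a guided sketch, rather than a one-line appeal, is appropriate here.
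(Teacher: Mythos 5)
Your overall strategy is the same as the paper's: both proofs are sketches arguing that Knight's construction behind Theorem~\ref{thm:2.5} is uniform in its data, so that it can be packaged as a single operator $\Gamma^{x\oplus r}(e)$, with the hypotheses on $\Phi_e$ matching the hypotheses on the $t_n$ verbatim. You also correctly identify that the only real work is verifying that no step of the original construction depends nonuniformly on $T$, $x$, $r$, or $e$.

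There are, however, two concrete points where the paper's sketch does work that yours either mishandles or omits. First, the finite-injury construction in Knight's Theorem~2.1 requires an \emph{effective} enumeration of the Scott set, i.e., an enumeration equipped with computable functions on indices witnessing closure under join, Turing reducibility, and Weak K\"onig's Lemma. A bare enumeration $r$ (a real whose columns are the members of $\mathcal S$) does not supply these witnesses, so your claim that ``the path selection for the Scott set is already encoded in the enumeration $r$'' is not accurate as stated; the correct resolution, which the paper cites, is Marker's theorem that one can pass uniformly from an enumeration to an effective enumeration. Second, in the $\alpha$-system part (Knight's Theorem~2.3 via Corollary~4.4), the output is $E(\pi)$ for a path $\pi$ through a special tree, and $\pi$ itself is \emph{not} computable from $x$ --- only its approximations $\pi^n$, which are $\Delta^0_n(x)$ uniformly in $n$, are available. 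The uniformity is rescued by the specific fact that $E(\pi)=E(\pi^1)$ and $\pi^1$ is uniformly computable from $x$; without this observation the continuity of the reduction would not follow from the general shape of the construction. Your sketch would need to address both of these to match the paper's level of justification.
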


\begin{proof}[Proof sketch.]
First, let us point out a notational difference. In \cref{thm:2.5}, the
functions~$t_n$ are required to be~$\Delta^0_n(x)$ uniformly in~$n$, while in
this theorem, the corresponding functions are $\Phi_e^{x^{(n-1)}}$,
where~$\Phi_e$ is a Turing operator that requires the $(n-1)^{\text{th}}$
jump of~$x$ to produce the right output. This discrepancy comes from the
definition of the arithmetic hierarchy. The set~$x^{(n)}$ is the canonical
complete $\Delta^0_{n+1}(x)$-set and is not~$\Delta^0_n(x)$.

We will now point out which parts of the proof of \cref{thm:2.5} guarantee
the required uniformity. In~\cite{Kn99}, Theorem~2.5 (i.e., our
\cref{thm:2.5}) is obtained as a direct consequence of Theorems~2.1
%FROM STEFFEN: Sorry, folks, but since Julia is very much comfortable with
%the pronouns she/her, I changed things above!
%FROM URI: Changing because I feel that any pronouns here read weird.
%We reference the paper, and some theorems by number, not the person.
%The paper and theorems have no gender.
and~2.3. The proof of Theorem~2.1 is a finite injury construction and, as
should be clear to readers familiar with such constructions, is uniform. The
only crucial part is that it needs an effective enumeration of the Scott
set~$\S$, i.e., an enumeration of~$\S$ with additional computable functions
on the indices that witness that~$\S$ is a Scott set. Marker~\cite{MM84}
showed that one can pass from an enumeration of a Scott set to an effective
enumeration, and thus this is not a hindrance.

The proof of Theorem~2.3 uses an $(n+1)$-approximation system relative
to~$\Delta^0_2(X)$. These $(n+1)$-approximation systems are a special version
of $\alpha$-systems developed by Ash and Knight. A standard reference
is~\cite{AK00}. The proof in Knight~\cite{Kn99} just shows that the premises
of Theorem~2.3 are sufficient to obtain an $(n+1)$-approximation system
satisfying the premises of \cite[Corollary 4.4]{Kn99}. This corollary
essentially says that there is a function~$E$ that takes paths~$\pi$ through
a special tree and produces a $\Sigma^0_2(x)$-object~$E(\pi)$. This~$E$ is
easily seen to be continuous from its definition, see \cite[p268, third
paragraph]{Kn99}. But one issue is that we cannot obtain~$\pi$ from~$x$,
rather~$\pi$ is the limit of approximations~$\pi^n$ which
are~$\Delta^0_n(x)$, uniformly in~$n$. Thankfully, as is explained in the
proof of \cite[Theorem~4.1]{Kn99} that gives rise to \cite[Corollary 4.4]{Kn99}, $E(\pi)
= E(\pi^1)$ and~$\pi^1$ can be uniformly computed from~$x$.
%As in our application, $E(\pi)=E(\pi^1)=Q$, we get the desired continuity.
As $E(\pi)=E(\pi^1)$ is what we are after, we get the desired continuity.
%%RELEVANTCOMMENT
%FROM URI: What's the Q right here? Is that our variable or is that Julia's
%DINO: That is in Julia's proof, I removed Q in the writeup
%FROM URI&: Follow-up comment: What is "this corollary"?
%What is our application? In this new version, this is a proof-sketch for
%a theorem. And do the last two sentences say something different (both saying
%$E(\pi)= E(\pi^1)$, or is there new content in the second sentence?)
\end{proof}

\subsection{Proof of Theorem \ref{thm:TTn}}

We fix a $\Pib^0_\omega$-set~$P\subseteq 2^\omega$, theories~$T$ and
$\{T_n\}_{n \in \omega}$, and show how to obtain the Scott set~$\S$ and
functionals~$t_n$ such that, using an input $p \in 2^\omega$, we satisfy
\cref{thm:uniform2.5} and thus output a model of~$T$ if $p \in P$ and a model
of some~$T_n$ otherwise.
%We will not give a proof of the uniform version of
%\cref{thm:2.5} but simply point at the crucial parts in Knight's proof and in
%the definition of her systems that ensure that it can be made uniform.

%FROM URI: I'm going to propose a different way without talking about Borel
%codes.
%To revert, change this one paragraph and the two %FROM Uri's below.

Given a $\Pib^0_\omega$-set~$P$, we can fix a decreasing sequence of
$\Pib^0_n$-sets~$P_n$ such that $P = \bigcap_{n \ge 1} P_n$. Let~$c$ be
strong enough so that each~$P_n$ is uniformly~$\Pi^0_n(c)$.

%Given a $\Pib^0_\omega$-set~$P$, we can fix a decreasing sequence of
%$\Pib^0_n$-sets~$P_n$ such that $P = \bigcap_{n \ge 1} P_n$.
%Let us explicitly describe the Borel code of~$P$ (and of the sets~$P_n$) that
%we will use to represent $P$. Fix a (computable) basis $\{U_i\}_{i \in
%\omega}$ for the
%topology of~$2^\omega$. The Borel code for~$P_n$ will be a Borel code (i.e.,
%a tree)~$C_n$ of nodes $\sigma \in \omega^{n+1}$ such that each $\tau \in
%\omega^n$ has an extension in~$C_n$; the interpretation is that
%\begin{equation}\label{eq:odd}
%P_n = \bigcap_{j_1} \bigcup_{j_2} \dots \bigcap_{j_n}
%       \,\bigcup_{\str{j_1 \dots j_n j} \in C_n} U_j
%\end{equation}
%if~$n$ is odd, and
%\begin{equation}\label{eq:even}
%P_n = \bigcap_{j_1} \bigcup_{j_2} \dots \bigcup_{j_n}
%       \,\bigcap_{\str{j_1 \dots j_n j} \in C_n} \ol{U_j}
%\end{equation}
%if~$n$ is even (where $\ol{U}$ is the complement of~$U$). We will assume that
%$P_2 \supseteq P_3 \dots$; and so $P = \bigcap_{n \ge 2} P_n$ has Borel code
%$$
%C = \bigcup_{n \ge 2} (\str{n}\widehat{\phantom{\alpha}}\,C_n).
%$$
%Note that the labeling function often found in the definition of Borel codes
%is implicit in our coding and thus can be
%suppressed.

Next, we fix an enumeration~$r$ of a Scott set~$\S$ containing $T \cap
\exists_k$ for each~$k$ and $\{T_n \cap \exists_k\}_{n,k \in\omega}$. Let $y
= c \oplus r \oplus T \oplus \bigoplus_{n \in \omega} T_n$. We will describe
a computation~$\Phi_e$
% $\Delta^0_n(Y)$ functions
satisfying the hypotheses of \cref{thm:uniform2.5}, namely that $\Phi^{(y
\oplus p)^{(n-1)}}_{e}(s)$ will be constant in $s$ and is an $r$-index for
$T_p \cap \exists_n$ for a complete theory~$T_p$.
%\begin{enumerate}
%\item\label{it:ind}
%$e$ is an index so that $\Phi^{(Y \oplus p)^{(n-1)}}_{e}$;
%\item\label{it:lim}
%for each~$n$ and $p$, $\lim_s t^p_n(s) = t^{p*}_n$ exists;
%\item\label{it:frag}
%for each~$n$ and $p$, $R_{t^{p*}_n} = T_p \cap \exists_n$ for a complete
%theory~$T_p$; and
%\item\label{it:sub}
%for each~$n$, $p$, and~$s$, $R_{t^p_n(s)} \subseteq T_p \cap \exists_n$.
%\end{enumerate}
Furthermore, we will ensure that if $p \in P$ then $T_p = T$; and if $p
\notin P$ then~$T_p=T_n$ for some $n \in \omega$. Note that we are applying
\cref{thm:uniform2.5} with the oracle $x=y\oplus p$.

We describe the index $e$ by giving a uniform method of computing an
$r$-index for $T_p \cap \exists_n$ from $(y \oplus p)^{(n-1)}$. For $n=1$, we
output a fixed index for $T \cap \exists_1$. For $n \ge 2$, $\Phi^{(y \oplus
p)^{(n-1)}}_{e}$ depends on whether $p \in P_{n-1}$.
%FROM URI: This one should be flipped back to revert the change away from
%Borel codes.
%Since
%membership of~$p$ in
%in each basic open set~$U_i$ is $p$-computable, it is easy to verify by
%induction from~\eqref{eq:odd} and~\eqref{eq:even}
%that membership of~$p$
%in~$P_{n-1}$ is $(C \oplus p)^{(n-1)}$-computable, given that checking
%membership in each infinite union and intersection takes one more jump to
%decode.
Since $P_{n-1}$ is uniformly~$\Pi^0_{n-1}(c)$, it is uniformly computable in
$(c\oplus p)^{(n-1)}\leq_T (y\oplus p)^{(n-1)}$ to determine membership
of~$p$ in~$P_{n-1}$.

For $n \geq 2$, let~$\Phi_e$ be the algorithm defined as follows. Given an
oracle of the form $(y \oplus p)^{(n-1)}$, let~$k_0$ be the least $k<n$ such
that $p \notin P_k$ if such~$k$ exists, and~$n$ otherwise. Let $\Phi_{e}^{(y
\oplus p)^{(n-1)}}(s)$ output the least $r$-index of $T_{k_0} \cap
\exists_n$. Note that finding this index is not effective in~$r$
and~$T_{k_0}$ but is effective in $(r\oplus T_{k_0})'\leq_T y'\leq_T (y\oplus
p)^{(n-1)}$ (needing one jump here is why we treat the case $n=1$
differently).

%Defined like this, $\Phi_{e}^{(X \oplus p)^{(n-1)}}$ is constant (thus we
%only need \cref{thm:knightversion} here) in the variable $s$ and clearly
%satisfies clauses \ref{it:ind}--\ref{it:sub}.
%FROM URI: We actually are using
%Knight's theorem 1.1 in "Degrees of Models with Prescribed Scott Set", since
%these are constant functions. I don't understand the history -- did this
%pre-date Solovay's theorem? In that and other papers, "Solovay's theorem"
%seems to just refer to the set of degrees of non-standard models of TA.

We have just produced a uniform sequence of computations, thus we can find a
single index~$e$ (note that~$y$ and~$p$ are used in the oracle, but not in
identifying the index) such that if $p\in P$ then for every~$n$,
$\Phi_e^{(y\oplus p)^{(n-1)}}$ satisfies the conditions in
\cref{thm:uniform2.5} for $T$ and if $p\not\in P$, then the conditions in
\cref{thm:uniform2.5} are satisfied for some~$T_n$. Thus, the Turing
operator~$\Gamma$ in \cref{thm:uniform2.5} gives a Wadge reduction from~$P$
to $\Mod(T)$, as $\Gamma^{y\oplus p}\in \Mod(T)$ if and only if $p\in P$.

\subsection{Proof of Lemma \ref{lem:corelemma}}\label{sec:corelemmaproof}

The proof is almost the same as the proof of \cref{thm:TTn}, except that one
of our approximation functions will not be constant. The difference is in how
we obtain the index~$e$ to apply \cref{thm:uniform2.5}. We let $y=c \oplus
r\oplus f \oplus g$, where~$P$
%FROM URI: One more to revert to bring Borel codes back:
is~$\Sigma^0_n(c)$,
%~$C$ is a Borel code for
%the $\Sigmab^0_n$-set~$P$
and~$r$ is an enumeration of a Scott set which contains both~$T^+$ and~$T^-$;
now~$f$ and~$g$ are functions such that $f(n)$ is an $r$-index for $T^-\cap
\exists_n$ and $g(n)$ is an $r$-index for $T^+\cap \exists_n$. Let $P =
\bigcup_{i\in \omega} P_i$, where the~$P_i$ are
%FROM URI: Changed:
uniformly~$\Delta^0_n(c)$.
%$\Deltab^0_n$.
%FROM URI: Somewhere in the computation, you have to say how you keep finding
%these r-indices for $T^+\cap \exists_n$. I added the functions f,g here to do
%that. Alternatively, make r an effective enumeration and include one of the
%following lines. -- Dino seemed to want to move away from discussing
%effectiveness of the enumeration, and we aren't trying to nail down the least
%Oracle here, so we can do this. Note that since $r$ is an effective
%enumeration and $T^+,T^-$ are in the Scott set, we can uniformly compute
%indices for $T^+\cap \exists_k$ and $T^-\cap \exists_k$ for each $k\in
%\omega$.
%Note that Marker's theorem shows that $r$ can
%compute another enumeration of the same Scott set which is effective, so we
%may assume that we can uniformly compute $R$-indices for $T^+ \cap \exists_k$
%and $T^- \cap \exists_k$.

Here, for $k<n$, we let~$\Phi_e^{(y\oplus p)^{(k-1)}}(s)$ output $f(k)$,
which is an $r$-index for $T^-\cap \exists_k = T^+\cap \exists_k$.
%FROM URI: Changed from:
%the
%least $r$-index~$i$ such that $r^{[i]} = T^- \cap \exists_k = T^+ \cap
%\exists_k$.
We now describe the algorithm to compute~$\Phi_e^{(y\oplus p)^{(n-1)}}(s)$
for $k = n$.
%FROM STEFFEN: I added the k >= n above since it sounded confusing before
%I also change > to >= below
%Uri changing back. The algorithm for k=n is different than for k>n.
%You can't just say for all $k\geq n$ and then describe the algorithm for k=n.
First, check whether $p \in \bigcup_{t<s} P_t$. Note that this is uniformly
computable from $(c\oplus p)^{(n-1)}$. If $p \notin \bigcup_{t<s} P_t$, we
output
%FROM URI: Changed:
$f(n)$, which is an $r$-index for
%the least $r$-index for
$T^- \cap \exists_n$. If $p \in \bigcup_{t<s} P_t$, we output
%FROM URI: Changed:
$g(n)$, which is an $r$-index for
%the least
%$r$-index for
$T^+ \cap \exists_n$. Note that the value of $\Phi_e^{(y\oplus
p)^{(n-1)}}(s)$ may change at most once as~$s$ increases. For $k > n$, we let
$\Phi_e^{(y\oplus p)^{(k-1)}}$ be the algorithm that checks whether $p \in P$
and outputs $g(k)$, an $r$-index for $T^+ \cap \exists_k$ if $p\in P$, and
outputs $f(k)$, the index for $T^- \cap \exists_k$ otherwise. The fact that
$T^- \cap \exists_n \subseteq T^+ \cap \exists_n$ guarantees that if $p\in
P$, then $\Phi_e$ satisfies the conditions for \cref{thm:uniform2.5} with
$T=T^+$, and that if $p\not\in P$, $\Phi_e$ satisfies the conditions for
\cref{thm:uniform2.5} with $T=T^-$. Thus,~$\Gamma$ gives a Wadge reduction
from~$P$ to $\Mod(T^+)$.

\end{document}